\title{Optimal matchings of randomly perturbed lattices}
\date{June, 2025}
\author[Dor Elboim]{Dor Elboim}
\author[Yinon Spinka]{Yinon Spinka}
\author[Oren Yakir]{Oren Yakir}
\address{Dor Elboim \newline Department of Mathematics, Stanford University, USA.}
\email{dorelboim@gmail.com}
\address{Yinon Spinka \newline School of Mathematical Sciences, Tel Aviv University, Israel. }
\email{yinonspi@tauex.tau.ac.il}
\address{Oren Yakir \newline Department of Mathematics, Stanford University, USA.}
\email{oren.yakir@gmail.com}
  \crefname{theorem}{Theorem}{Theorems}
  \crefname{thm}{Theorem}{Theorems}
  \crefname{lemma}{Lemma}{Lemmas}
  \crefname{lem}{Lemma}{Lemmas}
  \crefname{remark}{Remark}{Remarks}
  \crefname{prop}{Proposition}{Propositions}
  \crefname{proposition}{Proposition}{Propositions}
  \crefname{notation}{Notation}{Notations}
  \crefname{claim}{Claim}{Claims}
  \crefname{observation}{Observation}{Observations}
  \crefname{defn}{Definition}{Definitions}
  \crefname{corollary}{Corollary}{Corollaries}
  \crefname{section}{Section}{Sections}
  \crefname{figure}{Figure}{Figures}
  \crefname{exercise}{Exercise}{Exercises}
    \crefname{assumption}{Assumption}{Assumptions}
\newtheorem{thm}{Theorem}[section]
\newtheorem{claim}[thm]{Claim}
\newtheorem{lemma}[thm]{Lemma}
\newtheorem{corollary}[thm]{Corollary}
\newtheorem{prop}[thm]{Proposition}
\numberwithin{equation}{section}
\theoremstyle{definition}
\newtheorem{remark}[thm]{Remark}
\def\cQ{\mathcal{Q}}
\def\cF{\mathcal{F}}
\def\cE{\mathcal{E}}
\def\cD{\mathcal{D}}
\def\cC{\mathcal{C}}
\def\P{\mathbb{P}}
\def\E{\mathbb{E}}
\def\R{\mathbb{R}}
\def\Z{\mathbb{Z}}
\def\N{\mathbb{N}}
\def\G{\mathbb{G}}
\def\R{\mathbb{R}}
\newcommand{\1}{\mathds{1}}
\def\eps{\varepsilon}
\DeclareMathOperator{\var}{Var}
\DeclareMathOperator{\diam}{diam}
\begin{document}
\maketitle

\begin{abstract}
Consider a point process in Euclidean space obtained by perturbing the integer lattice with independent and identically distributed random vectors. Under mild assumptions on the law of the perturbations, we construct a translation-invariant perfect matching between this point process and the lattice, such that the matching distance has the same tail behavior as the hole probability of the point process, which is a natural lower bound.

\end{abstract}

\section{Introduction}
Consider the Gaussian perturbations of the lattice $\Z^2$, that is
\begin{equation}
    \label{eq:intro_2D_gaussian_perturbations}
    \Pi = \{ v + \xi_v \, : \, v\in \Z^2 \} \, ,
\end{equation}
where $\{\xi_v\}_{v\in \Z^2 }$ are i.i.d.\ Gaussian random vectors in $\R^2$. Suppose we want to ``assign back" the random set $\Pi$ to the lattice $\Z^2$ in a translation invariant way. The first scheme that comes to mind is $T_0:\Pi \to \Z^2$, where
\[
T_0(v + \xi_v) = v \, . 
\]
That is, each perturbed point is sent back to the lattice point it originated from. Clearly,
$$\log \P \big( \|T_0^{-1}(0) \| \ge r\big) = \log \P \big( \|\xi_0 \| \ge r\big) \simeq - r^2 \, .$$ 
Here and throughout, $A\simeq B$ means that both $A= O(B)$ and $B= O(A)$ as $r\to\infty$. 
However, $T_0$ does not necessarily minimize the tail behavior of $ |T^{-1}(0) |$ among all possible translation invariant bijections $T:\Pi \to \Z^2$, and a different scheme could lead to faster tail decay. Denote by $B_r$ the ball of radius $r$ centered at the origin. A simple computation (see Lemma~\ref{lemma:hole_probability_Z_d} below) shows that
\begin{equation}
    \label{eq:intro_hole_probability_to_2D_gaussian}
    \log \P\big(\Pi \cap B_r = \emptyset \big) \simeq -r^4 \, .
\end{equation}
On the event $\{\Pi \cap B_r = \emptyset\}$, any bijection $T$ must have $ \|T^{-1}(0) \| \ge r$, so~\eqref{eq:intro_hole_probability_to_2D_gaussian} gives a lower bound of $\exp (-Cr^4)$ on the optimal tail, for some $C>0$. As a special case of the main result of this paper (Theorem~\ref{thm:invariant_matching_Z_d}), we show that this lower bound is tight up to the constant in the exponent:

\vspace{2mm}

\begin{center}
    For $\Pi$ given by~\eqref{eq:intro_2D_gaussian_perturbations}, there exists a translation invariant bijection ${\sf T}:\Pi\to \Z^2$ so that $\P\big(  \|{\sf T}^{-1}(0) \| \ge r \big) \le \exp\big(-c r^4\big)$, for some constant $c>0$ and all sufficiently large $r$. 
\end{center}

\subsection{Random perturbations of the lattice}

In this paper we study the problem of finding an invariant bijection with optimal tails between the lattice and its random perturbations. For $d\ge 1$ we consider the \textbf{random perturbations of the lattice}, which is the point process\footnote{When the distribution of the perturbation $\xi$ is continuous, we get a simple point process, but in general, this point process may have multiplicities.} 
\begin{equation}
    \label{eq:def_of_random_perturbations_of_the_lattice}
    \Pi = \{ v + \xi_v \, : \, v\in \Z^d \},
\end{equation}
where $\{\xi_v\}$ is a sequence of i.i.d.\ random vectors in $\R^d$. Note that~\eqref{eq:intro_2D_gaussian_perturbations} is a special case of~\eqref{eq:def_of_random_perturbations_of_the_lattice}, with $d=2$ and Gaussian perturbations. Beyond Gaussian perturbations, another class of interest are polynomial perturbations, i.e.\ where $\P\big( \|\xi \| \ge r\big) \simeq r^{-\alpha}$ for some $\alpha>0$ (we write $\xi$ for a random vector with the same distribution as the $\xi_v$). In general, we do not assume that $\xi$ is spherically symmetric, though this is a natural situation to have in mind. 

A \textbf{matching} between two countable sets $\Lambda$ and $\Xi$ is an injective mapping $M \colon \Lambda \to \Xi$. We call the matching \textbf{perfect} if $M$ is onto $\Xi$, in which case, the inverse mapping $M^{-1} \colon \Xi \to \Lambda$ is matching between $\Xi$ and $\Lambda$. An \textbf{invariant matching} between $\Z^d$ and $\Pi$ is a random matching $M : \Z^d \to \Pi$, such that the joint law of $(\Pi,M)$ is invariant under translations by elements of $\Z^d$. Every perturbed lattice comes with a canonical perfect invariant matching, namely $M_0(v) = v+\xi_v$. Given any invariant matching $M$, we consider the matching distance $\|M(v)-v\|$ of a point $v \in \Z^d$, the distribution of which is the same for all $v$. We are particularly interested in its tail behavior, namely, the rate of decay of $\P (\|M(0)\| \ge r)$ as $r \to \infty$. 

Define
\[ p(r) = \P \big(\|\xi\| \ge r\big) \, , \quad \text{and} \quad h(r) = \P\big(\Pi\cap B_r = \emptyset\big) \, .\]
The optimal tail behavior for an invariant matching lies between $h(r)$ and $p(r)$. Indeed, the canonical matching $M_0$ has matching distance tail $p(r)$, while on the ``hole" event $\{\Pi\cap B_r = \emptyset\}$ we must have $\|M(0)\|\ge r$ for any matching $M:\Z^d\to \Pi$. To better understand this gap, it is convenient to try to express (bound above and below) $h(r)$ in terms of $p(r)$. We do this later, but for now only mention that as long as the perturbation are ``nice", we have
\[ r^d \log p(r) \simeq \log h(r) . \]
%with the implicit constants depending only on the perturbation distribution.
In particular,
\begin{align*}
 \log h(r) &\simeq -r^{d+2} &&\text{for any Gaussian perturbation},\\
 \log h(r) &\simeq -r^d \log r &&\text{for any polynomial perturbation}.
\end{align*}

\subsection{Main result}

Throughout, we will assume that the perturbation distribution function $p(r)$ is sufficiently regular. Specifically, we will assume there exists $C>0$ so that
\begin{equation}\label{eq:assumption i}
   \int_{r}^{\infty} p(t) \, {\rm d} t \le C  r  p(r)   \tag{Int} 
\end{equation}
for all $r\ge 1$. It is not hard to check that~\eqref{eq:assumption i} holds for Gaussian perturbations, as well as for polynomial perturbations with $\alpha>1$. For polynomial perturbations with $\alpha\le 1$, the integral on the left-hand side of~\eqref{eq:assumption i} diverges, and hence~\eqref{eq:assumption i} does not hold in this case. We will further impose on the perturbation distribution that
\begin{equation}\label{eq:assumption ii}
  \sup_{r\ge 1} \frac{\log p(kr)}{\log p(r)}<\infty \quad\text{for all }k\ge1. \tag{Reg}
\end{equation}
The assumption~\eqref{eq:assumption ii} is not essential for our result to hold, but it simplifies the presentation. For the most part, it allows a cleaner description of the hole probability $h(r)$ in terms of $p(r)$; see \cref{lemma:hole_probability_Z_d} and \cref{rem:assumptions} below. Again, it is not hard to verify that~\eqref{eq:assumption ii} holds for Gaussian and polynomial perturbations. Our main result is that the hole probability is attainable by a perfect invariant matching for a large class of random perturbations. 

\begin{thm}
\label{thm:invariant_matching_Z_d}
 Let $\Pi$ be a perturbed lattice~\eqref{eq:def_of_random_perturbations_of_the_lattice}, whose perturbation $\xi$ satisfies \eqref{eq:assumption i} and \eqref{eq:assumption ii}. There exists $c>0$ and a perfect invariant matching $M$ between $\Z^d$ and $\Pi$ whose matching distance satisfies, for all sufficiently large $r$,
 \[\P (\|M(0)\| \ge r) \le  h(r)^c \, .\]
\end{thm}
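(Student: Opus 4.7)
My plan is a hierarchical multi-scale construction of $M$, in the spirit of classical invariant-matching schemes but exploiting the rigidity of the perturbed lattice to achieve the sharper tail. Fix a large constant $K$, set $L_k := K^k$, and choose a nested sequence of dyadic partitions $\cQ_k$ of $\R^d$ into cubes of side $L_k$, shifted by a uniform random offset independent of $\Pi$ (to guarantee translation invariance). Call a cube $Q \in \cQ_k$ \emph{clean at scale $k$} if every lattice point $v$ within a suitable enlargement of $Q$ satisfies $\|\xi_v\|\le L_k/10$, and \emph{defective at scale $k$} otherwise. Inside a clean scale-$k$ cube, all the relevant perturbations are short, so the multisets of lattice points and of perturbed points in the cube can be brought into balance up to a thin boundary layer and matched locally via any translation-equivariant deterministic rule.

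Given the hierarchy, I would build $M$ inductively by processing scales $k = 0, 1, 2, \ldots$ in turn. At scale $k$, for each clean $Q \in \cQ_k$ whose scale-$(k-1)$ children were all defective (a condition that is vacuous at $k=0$), apply the deterministic local pairing inside $Q$ to those lattice points and perturbed points not yet matched at smaller scales. By design, each lattice point $v$ first sits in a clean cube at the smallest $k_v$ with $L_{k_v} \gtrsim \|\xi_v\|$ (and similarly for the nearby perturbations), so $v$ is matched exactly once, at scale $k_v$, with $\|M(v)-v\|=O(L_{k_v})$. Translation invariance is inherited from the invariance of the shift and the equivariance of the local rule, and perfectness follows symmetrically for the perturbed points.

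The heart of the argument is the tail bound $\P(\|M(0)\|\ge r)\le h(r)^c$. By construction, $\|M(0)\|\ge r$ forces the scale-$k$ cube $Q$ containing the origin to be defective for $k$ with $L_k \asymp r$. A naive union bound only yields $\P(Q\text{ defective})\lesssim L_k^d \, p(L_k/10)$, which is much weaker than needed. The missing ingredient, and the main obstacle, is a deterministic combinatorial fact: provided that only $m = o(L_k^d)$ of the lattice points in (the enlargement of) $Q$ have $\|\xi_v\|>L_k/10$, one can redirect the corresponding $m$ matching failures into neighboring clean cubes via a short network-flow rerouting, while keeping all matching distances of order $L_k$. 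Hence an actual scale-$k$ failure at the origin requires $\Omega(L_k^d)$ defective perturbations in $Q$, an event whose probability is at most $p(L_k)^{c L_k^d} \simeq h(L_k)^c$ by a binomial tail estimate together with~\eqref{eq:assumption ii} (to replace $p(L_k/10)$ by $p(L_k)$ in the exponent) and~\cref{lemma:hole_probability_Z_d} (for the $\simeq$). Assumption~\eqref{eq:assumption i} is used when summing defect contributions from the higher scales $k'>k$ via a Borel--Cantelli-type estimate, to ensure that these do not dominate the scale-$k$ bound. The main technical hurdle throughout will be designing the local rule and the rerouting step so that they are genuinely translation-equivariant and compose consistently across scales.
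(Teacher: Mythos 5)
Your overall architecture (multi-scale boxes, a binomial tail estimate giving $p(r)^{cr^d}\simeq h(r)^c$, averaging for invariance) is in the right spirit, but the step you label a ``deterministic combinatorial fact'' is precisely the crux of the theorem, and as stated it is not true. Whether the lattice points of a scale-$k$ cube $Q$ can be matched within distance $O(L_k)$ is \emph{not} determined by the perturbations of the lattice points in an enlargement of $Q$: it is a global property of the configuration. Concretely, suppose every scale-$k$ cube inside $B_{L_k^2}$ has exactly $m=L_k^d/\log L_k=o(L_k^d)$ lattice points whose perturbations fly to distance $L_k^{100}$, while all other perturbations have norm at most $1$. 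Each such cube then satisfies your hypothesis, yet the ball $B_{L_k^2}$ has a deficit of about $L_k^{2d}/\log L_k$ perturbed points, while an annulus of width $O(L_k)$ around it contains only $O(L_k^{2d-1})$ points; by Hall's condition some lattice point of $B_{L_k^2}$ must then be matched at distance much larger than $L_k$. So the rerouting cannot be certified cube by cube: the deficits of nearby defective cubes compete for the same routing capacity, and a correct argument must control the total flux through the boundary of each box rather than the number of bad points inside it. (Related unaddressed issues are the lattice points far outside the enlargement of $Q$ whose perturbation segments pass through $Q$, and the boundary-layer imbalances that accumulate when the local rules are composed across scales.)

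This is exactly what the paper's argument supplies and what your proposal black-boxes. The crossing sets $\cC(D)$ count \emph{all} lattice points $u$ whose segment $[u,\Pi_u]$ meets $D$ but ends outside it; the condition $|\cC(D)|\le|D\cap\Z^d|$ for every box of an adapted partition is what makes Hall's theorem go through (\cref{prop:deterministic}); and the partition must itself be built randomly from the configuration, with box sizes growing until the crossing condition holds and then regularized so that neighboring boxes have comparable sizes (\cref{lem:construction of D}). On the probabilistic side, bounding $|\cC(Q_r)|$ splits into a near-field term, which is essentially your binomial estimate, and a far-field term counting distant points whose segments cross $Q_r$; the latter is where assumption~\eqref{eq:assumption i} genuinely enters (\cref{lem:tail}), not in a Borel--Cantelli sum over higher scales. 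Without a precise substitute for the crossing condition, your tail bound estimates the probability of the wrong event and the construction does not close.
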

Theorem~\ref{thm:invariant_matching_Z_d} implies, in particular, that there exists an invariant perfect matching whose matching distance tail attains the hole probability for any Gaussian perturbation, and for any polynomial perturbation with $\alpha>1$. We give the proof of Theorem~\ref{thm:invariant_matching_Z_d} in Section~\ref{sec:invariant_matching_perturbed_lattice} below. 

Recall that a matching $M:\Z^d \to \Pi$ is called a \textbf{factor} (of $\Pi)$ if there exists a deterministic $\Z^d$-equivariant function $f$ such that $M=f(\Pi)$. We remark that the perfect matching prescribed in Theorem~\ref{thm:invariant_matching_Z_d} is \emph{not} a factor, as the proof relies at several points on compactness arguments, which in principle introduce additional randomness beyond that of $\Pi$ itself. We do not know whether it is possible to construct a perfect factor matching between $\mathbb{Z}^d$ and $\Pi$ which attains the hole probability; see Section~\ref{sec:open_problems} for a discussion on this problem.

It is also interesting to study polynomial perturbations with $\alpha\le 1$. For $d=1$, we prove that for $\alpha\in(0,1)$ there is a translation invariant matching with tail  $\mathbb P (|M(0)|\ge r) \le C r^{-\frac{1+\alpha }{2}}$, and that this cannot be improved by much.

\begin{thm}\label{thm:d=1}
    Fix $d=1$ and $\alpha\in(0,1)$. Suppose that the distribution of the perturbation $\xi$ is symmetric around the origin and satisfies 
    \begin{equation}
       cr^{-\alpha } \le  \mathbb P (|\xi | \ge r) \le Cr^{-\alpha }
    \end{equation}
    for two constants $C,c>0$ and all $r \ge 1$. Then:
    \begin{enumerate}
        \item 
        There is a perfect matching $M:\mathbb Z \to \Pi$ such that $$\mathbb P (|M(0)|>r)\le Cr^{-\frac{1+\alpha }{2}} \, .$$
        \item 
        Any invariant perfect matching $M:\mathbb Z \to \Pi $ has $$\E \big[ |M(0)|^{\frac{1+\alpha}{2}} \big] = \infty \, .$$
    \end{enumerate}
\end{thm}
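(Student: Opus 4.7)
Both parts hinge on properties of the discrepancy process $D(n) := |\Pi \cap (0, n]| - n$. Under $\mathbb{P}(|\xi| \ge r) \asymp r^{-\alpha}$ with $\alpha \in (0,1)$, a direct computation yields $\operatorname{Var}(D(n)) \asymp n^{1-\alpha}$; since $D(n)$ is a sum of bounded independent indicators, it concentrates at the scale $\sigma(n) := n^{(1-\alpha)/2}$ with nearly-Gaussian tails in the central limit regime.

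\textbf{Proof of (2).} Fix $r \ge 1$ and $\Delta := c \sigma(r)$ for a small constant $c > 0$. Define the deficit event
\[ E_a := \{|\Pi \cap (a, a+r]| \le r - \Delta\}, \qquad a \in \mathbb{Z}. \]
A Berry--Esseen-type anti-concentration estimate for the bounded independent indicators $\mathbf{1}[v + \xi_v \in (a, a+r]]$ gives $\mathbb{P}(E_a) \ge c_0 > 0$ uniformly in $a, r$. On $E_a$, the pigeonhole principle forces at least $\Delta$ lattice points $v \in (a, a+r] \cap \mathbb{Z}$ to satisfy $M(v) \notin (a, a+r]$. A direct count verifies the identity
\[ \sum_{a \in \mathbb{Z}} \mathbf{1}\bigl[v \in (a, a+r],\ M(v) \notin (a, a+r]\bigr] = \min(|M(v)-v|, r), \]
and summing over $a \in [0, N)$, taking expectation, and using translation invariance yields
\[ N \cdot \mathbb{E}\bigl[\min(|M(0)|, r)\bigr] + O(r^2) \ge c_0 \Delta N. \]
Letting $N \to \infty$ and writing $F(r) := \mathbb{E}[\min(|M(0)|, r)] = \int_0^r \mathbb{P}(|M(0)| > t)\,dt$,
\[ F(r) \ge c \, r^{(1-\alpha)/2} \qquad \text{for all } r \ge 1. \tag{$\ast$} \]
With $\beta = (1+\alpha)/2$, integration by parts gives
\[ \mathbb{E}|M(0)|^\beta = \beta \int_0^\infty t^{\beta-1} \mathbb{P}(|M(0)| > t)\,dt \ge \beta(1-\beta)\int_1^\infty t^{\beta - 2} F(t)\,dt, \]
and $(\ast)$ yields $t^{\beta-2} F(t) \ge c\, t^{\beta - 2 + (1-\alpha)/2} = c\, t^{-1}$, so the integral diverges and $\mathbb{E}|M(0)|^{(1+\alpha)/2} = \infty$.

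\textbf{Proof of (1).} For each $v \in \mathbb{Z}$, let
\[ R(v) := \min\{s \ge 1 : |\Pi \cap [v - s, v + s]| = 2s + 1\} \]
be the smallest symmetric balanced radius around $v$, and construct an invariant matching $M$ such that $|M(v) - v| \le 2 R(v)$. This can be done via a gravitational-allocation-type iterative procedure on nested balanced balls (or via a stable matching with distance preferences); existence of such a matching follows from a Hall-type condition combined with the fact that balanced intervals form a natural translation-invariant family. The event $\{R(v) \ge r\}$ coincides with the event that the mean-zero symmetric discrepancy $S_s := |\Pi \cap [v - s, v + s]| - (2s+1)$ avoids $0$ for all $1 \le s \le r$. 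Since $S_\cdot$ has variance scale $\sigma(s)$ and shares scaling with fractional Brownian motion of Hurst index $H = (1-\alpha)/2 < 1/2$, a persistence/ballot estimate adapted to our setting yields
\[ \mathbb{P}(R(v) \ge r) \le C r^{-(1-H)} = C r^{-(1+\alpha)/2}, \]
and hence $\mathbb{P}(|M(0)| \ge r) \le C r^{-(1+\alpha)/2}$.

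\textbf{Main obstacles.} For Part~(2), the delicate step is the uniform lower bound $\mathbb{P}(E_a) \ge c_0$: the indicators $\mathbf{1}[v + \xi_v \in (a, a+r]]$ have heterogeneous variances, and contributions from perturbations of distant lattice points carry power-law tails (for $\alpha \ge 1/3$ the naive Lyapunov ratio does not vanish), so a bespoke Berry--Esseen-style argument is needed. For Part~(1), the challenges are establishing the persistence exponent for $S_\cdot$ (which is not a genuine fBm and requires a direct ballot/coupling argument at each scale) and resolving overlap conflicts to turn the ball-based construction into a genuine bijection.
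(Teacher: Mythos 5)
Your Part (2) is essentially the paper's argument in different packaging: the deficit event $E_a$, the counting identity, and the averaging over $a$ reproduce the Holroyd--Pemantle--Peres--Schramm scheme that the paper follows, and your concluding integration by parts is an adequate substitute for the paper's Claim~\ref{claim:finite_moment_implies_L_1_truncation_cannot_grow_too_fast}. The step you flag as delicate is not: since the indicators $\mathbf{1}[v+\xi_v\in(a,a+r]]$ are uniformly bounded and the total variance diverges like $r^{1-\alpha}$, the Lindeberg condition holds trivially and the Lindeberg--Feller CLT gives $\P(E_a)\ge c_0$ for large $r$; no Lyapunov ratio or Berry--Esseen rate is needed, because you only want a constant lower bound on a probability, not a rate of convergence.

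Part (1) has two genuine gaps. First, the existence of a matching with $|M(v)-v|\le 2R(v)$ is asserted, not proved: the ``Hall-type condition'' for the family of balanced balls is precisely the nontrivial combinatorial content (balanced balls around different $v$ overlap and compete for the same points of $\Pi$), and you supply no argument. Second, and more seriously, the bound $\P(R(v)\ge r)\le Cr^{-(1+\alpha)/2}$ is a persistence estimate for a non-Gaussian, non-self-similar process; identifying the persistence exponent $1-H$ even for genuine fractional Brownian motion is Molchan's theorem, and transferring it to the discrete discrepancy process $S_s$ is a substantial project that you acknowledge but do not carry out. Since the exponent $(1+\alpha)/2$ is exactly the content of the theorem, this cannot be left as an ``adapted estimate.'' The paper avoids both difficulties: it takes the stable (mutually-closest) matching and shows, using stability alone, that the set $W$ of $v\in[0,t)$ with $|M(v)-v|>t$ satisfies $|W|\le 1+\max_{r\in[0,t]}F(r)-\min_{r\in[0,t]}F(r)$ where $F(r)=r-\Pi[0,r)$; a dyadic chaining argument with Freedman's inequality then gives $\E[\max_{r\le t}|F(r)|]\le Ct^{(1-\alpha)/2}$, and translation invariance turns $\E|W|\le Ct^{(1-\alpha)/2}$ into $t\cdot\P(|M(0)|>t)\le Ct^{(1-\alpha)/2}$. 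In other words, averaging over $v\in[0,t)$ supplies the extra factor of $t$ that you are trying to extract from a persistence exponent, and only a first-moment bound on the maximal discrepancy is required. You would need either to adopt this route or to actually prove the persistence bound and the Hall condition for your balanced-ball construction.
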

Note that the hole probability in this case is $\exp(-\Theta(r\log r))$, which is much smaller than $r^{-\frac{1+\alpha }{2}}$. The proof of Theorem~\ref{thm:d=1} is provided in Section~\ref{sec:polynomial_in_d_1}. 
We do not expect that this transition in $\alpha$ persists for polynomial perturbations in $d\ge 2$, and instead we believe that the hole probability is attainable in $d=2$ for \emph{any} polynomial perturbations. We also mention that the matching constructed in the first part of the theorem is a factor matching scheme. For further discussion on these points and other open problems, see Section~\ref{sec:open_problems}.

\subsection{Related works}
\label{sec:related_works}

A closely related line of work concerns the problem of matching a unit intensity Poisson process to the lattice $\Z^d$. In this case, the tail behavior for the matching distance is strongly dimension-dependent. It is shown in~\cite{Holroyd-Pemantle-Peres-Schramm} that for any translation invariant matching from the Poisson process to the lattice we have 
\[
\E[ \|M(0)\|^{d/2}] = \infty \, , \qquad \text{for} \ d\in\{1,2\}\, , 
\]
while for $d\ge 3$ it is possible to construct a matching with $\|M(0)\|$ having finite exponential moment. See also~\cite{Angel-Ray-Spinka, Chatterjee-Peled-Peres-Romik, Hoffman-Holroyd-Peres, Talagrand, timar2023factor} for related background and results. 

A point process $\Pi\subset \R^d$ is called \textbf{hyperuniform} if 
\[
\lim_{r\to \infty} \frac{\var \big(|\Pi\cap B_r|\big)}{r^d} = 0 \, .
\]
In particular, the Poisson process is \emph{not} hyperuniform. The concept of hyperuniformity, introduced in~\cite{Torquato, Torquato-Stillinger}, captures a balance between local randomness and large-scale order, reminiscent of a lattice. Random perturbations of the lattice are perhaps the simplest example of a hyperuniform point process~\cite{Gacs-Szasz,Yakir-fluctuations}. Recently, it was shown~\cite{LR-Yogeshwaran} that in $d=2$, any translation-invariant hyperuniform point process with a mild integrability condition can be matched to the lattice $\Z^2$ with matching distance having finite second moment. Moreover, this condition is almost sharp in $d=2$, and fails completely for $d\ge3$, as shown in the recent work~\cite{Dereudre-Flimmel-Huesmann-Leble}. This raises a natural question: for particularly well-behaved hyperuniform point processes, such as the two-dimensional Gaussian perturbations of the lattice~\eqref{eq:intro_2D_gaussian_perturbations} -- can stronger tail decay bounds be obtained on the matching distance (e.g., a finite exponential moment)? 
%After all, a finite second moment follows trivially from the definition for these point processes. 
Theorem~\ref{thm:invariant_matching_Z_d} provides a positive answer to this question for a broad class of random perturbations of the lattice, including the Gaussian case.  

Besides being a canonical example of hyperuniform point processes, random perturbations of the lattice are a well-studied model, arising in several other physical context. We do not aim to survey the full scope of the physics literature, and just note that perturbed lattices appear as models of thermal fluctuations (see, e.g., \cite[Section~5.3]{Baake-Grimm}) and have been proposed as a framework for modeling cosmic structure~\cite{Gabrielli-Joyce-Labini}. In mathematics, there has also been a recent resurgence in studying different aspects of perturbed lattices, see for instance~\cite{Flimmel,Mastrilli, Peres-Sly, Yakir-recovering}.

\subsubsection*{Acknowledgments}
We thank Mikhail Sodin for helpful discussions. Y.S.\ is supported in part by ISF grant 1361/22.
O.Y.\ is supported in part by NSF postdoctoral fellowship DMS-2401136.

\section{Invariant matching for the perturbed lattice}
\label{sec:invariant_matching_perturbed_lattice}

In this section we prove \cref{thm:invariant_matching_Z_d}.
We first provide in \cref{sec:deterministic_matching_condition} a deterministic matching condition based on notions of ``cover'' and ``crossing''.
We next construct in \cref{sec:regular-cover} a random cover suited to our particular point process.
We then provide in \cref{sec:cover-prob-estimates} probabilistic estimates on this random cover. 
Finally, we put all of these steps together in \cref{sec:thm-proof} to prove \cref{thm:invariant_matching_Z_d}.

When proving Theorem~\ref{thm:invariant_matching_Z_d} we may work with the $\ell^\infty$ distance on $\R^d$. Indeed, this can be seen using Lemma~\ref{lemma:hole_probability_Z_d} (which is stated for $\ell ^{\infty}$) and assumption~\eqref{eq:assumption ii}. Hence throughout this section we use the $\ell^\infty$ distance, so that balls are Euclidean boxes.

\subsection{A deterministic matching condition}
\label{sec:deterministic_matching_condition}

In the following proposition we construct a matching between $\mathbb Z ^d$ and a deterministic set of points in $\mathbb R^d$ that satisfy some requirements. We start with some definitions.

A \textbf{regular cover} of $\mathbb R^d$ is a countable collection $\cD$ of compact subsets of $\mathbb R ^d$ whose union is all of $\mathbb R ^d$, and such that each $v \in \mathbb Z ^d$ is contained in a unique set $D \in \cD$.  We write $D_v$ for this set, and we denote by $N_v$ the union of all $D \in \cD$ that intersect $D_v$. Let us note that in our application of the proposition below, the cover $\mathcal D$ will simply contain blocks in $\mathbb R^d$.

Let $(\Pi_v)_{v \in \mathbb Z ^d}$ be a locally finite collection of points in $\mathbb R^d$ indexed by $\mathbb Z ^d$. Let $\Pi$ be the set of these points (with multiplicities).
Given a subset $D \subset \R^d$, we say that $u \in \mathbb Z ^d$ \textbf{crosses} $D$ if the line segment $[u,\Pi_u]$ intersects $D$, but $\Pi_u \notin D$. Let $\cC(D)$ be the set of $u \in \Z^d$ that cross $D$.

\begin{prop}\label{prop:deterministic}
Let $\mathcal D$ be a regular cover and let $(\Pi_v)_{v \in \mathbb Z ^d}$ be locally finite.
Suppose that
\[ |\cC(D)| \le |D \cap \Z ^d| \qquad\text{for all }D \in \cD .\]
Then there exists a matching $M \colon \mathbb Z ^d \to \Pi$ such that $M(v) \in N_v$ for all $v \in \Z^d$.
\end{prop}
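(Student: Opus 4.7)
My plan is to apply the Hall--Rado theorem for countable locally finite bipartite graphs to the graph $G$ with left vertex set $\Z^d$, right vertex set $\Pi$ (as a multiset indexed by $\Z^d$), and edges $v \sim u$ whenever $\Pi_u \in N_v$. Local finiteness of $\Pi$ and of $\cD$ force $G$ to be locally finite, so the desired matching $M$ exists if and only if the Hall condition $|\{u \in \Z^d : \Pi_u \in A_S\}| \ge |S|$ holds for every finite $S \subseteq \Z^d$, where $A_S := \bigcup_{v \in S} N_v$.

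Fix such a finite $S$ and introduce the ``core'' $A_S^\star := \bigcup_{v \in S} D_v$, together with the block families $\cD_S := \{D \in \cD : D \cap D_v \ne \emptyset \text{ for some } v \in S\}$ and $\cD_S^\star := \{D_v : v \in S\}$, as well as the quantity $L(A_S) := |\{u \in A_S \cap \Z^d : \Pi_u \notin A_S\}|$ counting lattice points in $A_S$ whose image leaves $A_S$. Splitting $\{u : \Pi_u \in A_S\}$ by whether $u$ lies in $A_S$, and using that the contribution of points entering from outside is nonnegative, one gets
\[
|\{u : \Pi_u \in A_S\}| \;\ge\; |A_S \cap \Z^d| - L(A_S).
\]
Since $S \subseteq A_S^\star \cap \Z^d \subseteq A_S \cap \Z^d$, the Hall inequality would follow from the key bound
\[
L(A_S) \;\le\; |(A_S \setminus A_S^\star) \cap \Z^d|.
\]

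The heart of the argument is the claim that every $u \in L(A_S)$ crosses some ``shell'' block $D^\star \in \cD_S \setminus \cD_S^\star$. If $u$ already sits in the shell, this is immediate: $D_u$ itself is a shell block and $\Pi_u \notin A_S \supseteq D_u$ yields $u \in \cC(D_u)$. In the remaining case $u \in A_S^\star$ I would invoke the topological observation $\partial A_S^\star \subseteq \operatorname{int}(A_S)$, which reduces to $D_v \subseteq \operatorname{int}(N_v)$ for every $v \in S$: any boundary point of $D_v$ lies in finitely many blocks $D' \in \cD$, each of which meets $D_v$ and is therefore contained in $N_v$, so their union is a neighborhood of the point inside $N_v$. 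The segment $[u, \Pi_u]$ leaving the core must therefore first enter the shell, so it crosses some $D^\star \in \cD_S \setminus \cD_S^\star$, with $\Pi_u \notin A_S \supseteq D^\star$ witnessing the crossing. Given this claim, a union bound combined with the hypothesis $|\cC(D)| \le |D \cap \Z^d|$ closes the key bound:
\[
L(A_S) \;\le\; \sum_{D^\star \in \cD_S \setminus \cD_S^\star} |\cC(D^\star)| \;\le\; \sum_{D^\star} |D^\star \cap \Z^d| \;=\; |(A_S \setminus A_S^\star) \cap \Z^d|,
\]
where the last equality uses that each lattice point lies in a unique element of $\cD$.

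I expect the main obstacle to be the topological step, namely that a segment escaping from the core $A_S^\star$ to outside $A_S$ must pass through the shell. For the cube-like covers used in the paper's applications this is immediate, but in full generality it requires the short local-finiteness argument sketched above. A secondary point is the appeal to Hall--Rado for an infinite bipartite graph, which is standard once $G$ has been verified to be locally finite.
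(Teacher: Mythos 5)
Your argument is correct and is essentially the paper's proof: your core $A_S^\star$ and shell $\cD_S\setminus\cD_S^\star$ coincide with the paper's $E$ and $\cF$ (since each lattice point lies in a unique block, the blocks meeting $A$ are exactly the $D_v$, $v\in A$), and the bound on escaping points via crossings of shell blocks, followed by the hypothesis $|\cC(D)|\le|D\cap\Z^d|$, is the same chain of inequalities. The only cosmetic differences are invoking the infinite (locally finite) Hall theorem directly rather than compactness plus finite Hall, and spelling out the topological step $D_v\subseteq\operatorname{int}(N_v)$, which the paper handles more tersely via a separation remark.
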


\begin{proof}
By compactness, it is enough to prove that for any finite set $L \subseteq \Z ^d$ there is a matching between $L$ and $\Pi$ such that each $v \in L$ is matched with a point in $N_v$.

Let $\G$ be the bipartite graph on $L \cup \Pi$ in which $v \in L$ and $x \in \Pi$ are adjacent if $x \in N_v$.
By Hall's theorem, the desired matching exists as long as $|N(A)| \ge |A|$ for all $A \subseteq L$.
Here, $N(A)$ is to be understood as a multiset, and $|N(A)|$ counts points with their multiplicities. 

For a fixed $A \subset L$, we define
\[ \cE := \{ D \in \cD : D\cap A \neq \emptyset \}, \quad E := \bigcup_{D\in \cE} D \, , \quad \cF := \{ D \in \cD \setminus \cE : D \cap E \neq \emptyset\}, \quad F := \bigcup_{D\in \cF} D \,  .\]
Note that
\[ N(A) = \Pi \cap (E \cup F) .\]
Next, define the ``escaping'' points via
\[ B := \{ v \in (E \cup F) \cap \Z ^d : \Pi_v \notin N(A) \} = \{ v \in (E \cup F) \cap \Z ^d : \Pi_v \notin E \cup F \} \, ,\]
and observe that
\[ B \subset \bigcup_{D \in \cF} \cC(D) .\]
Indeed, if $v\in F \cap \Z ^d$ and $\Pi _v\notin E\cup F$ then $v$ crosses $D_v\in \mathcal F$. Moreover, if $v\in E \cap \Z ^d$ and $\Pi _v\notin E\cup F$, then the line segment $[v,\Pi_v]$ intersects some $D \in \cF$ (since $E \cup (\R ^d \setminus F)$ is not connected) and $\Pi _v\notin D$, so that $v$ crosses $D$.
Therefore,
\[ |B| \le \sum_{D \in \cF} |\cC(D)| \le \sum_{D \in \cF} |D \cap \Z ^d| = |F \cap \Z ^d| ,\]
where the last equality uses that each $v \in \Z ^d$ is contained in a unique $D \in \cD$.
Thus,
\begin{align*}
|N(A)| = |\Pi \cap (E \cup F)| &\ge |\{v \in (E \cup F) \cap \Z ^d : \Pi_v \in E \cup F\}| \\&= |(E \cup F) \cap \Z ^d| - |B| \ge |E \cap \Z ^d| \ge |A| . \qedhere
\end{align*}
\end{proof}

\begin{remark}
    In the last proposition, the lattice $\Z^d$ can be replaced by any locally finite subset of $\R^d$.
    In fact, it is clear from the proof that only minor topological properties of the underlying space are needed, and that analogous results hold on spaces other than $\R^d$.
\end{remark}

\subsection{Constructing the regular cover}
\label{sec:regular-cover}

Recall the definitions from \cref{sec:deterministic_matching_condition}.
Let $(\Pi_v)_{v \in \Z^d}$ be a point set in $\R^d$.
%Recall from \cref{sec:deterministic_matching_condition} that $\cC(D)$ is the set of $v \in \Z^d$ that cross a set $D \subset \R^d$.
Our goal now is to construct a regular cover $\cD$ of $\R^d$ such that
\begin{equation}\label{eq:D-prop1}
    |\cC(D)| \le |D \cap \Z^d| \qquad\text{for all }D \in \cD
\end{equation}
and
\begin{equation}\label{eq:D-prop2}
    \diam D \le 2 \diam D' \qquad\text{for all $D,D' \in \cD$ that intersect.}
\end{equation}

Our regular cover $\cD$ will consist of closed boxes whose interiors are pairwise disjoint. Moreover, all the boxes will be diadic (shifted by $\frac12$), i.e., of the form $k+[-\frac12,2^i-\frac12]^d$ for some $i \in \N$ and $k\in 2^i \Z^d$. Let $\cQ_i$ be the set of boxes of the form $k+[-\frac12,2^i-\frac12]^d$, where $k\in 2^i \Z^d$. For $v\in \Z^d$, let $Q_i(v)$ be the unique box $Q\in \cQ_i$ containing $v$. Denote $q_i := |Q_i(v) \cap \Z^d| = 2^{id}$.

We construct $\cD$ in several steps. First, define
\begin{equation}\label{eq:I_0}
    I^0_v := \min \big\{ i : |\cC(Q_i(v))| \le q_i \big\} \quad \text{and} \quad R^0_v := 2^{I^0_v} .
\end{equation}
% Next, we smoothen out the field $(I^0_v)_{v \in \Z^d}$ by defining $\phi_i(x):=-\log_2(1-\frac x{2^{i+1}})$ and
% \begin{equation}
%     I^1_v := \max \big\{ I^0_u - \lfloor \phi_{I^0_u}(\|u-v\|) \rfloor  : u \in \Z^d \big\}.
% \end{equation}
Next, we smoothen out the field $(I^0_v)_{v \in \Z^d}$ by defining
\begin{equation}\label{eq:I_1}
    R^1_v := \max \big\{ R^0_u - \tfrac14\|u-v\| : u \in \Z^d \big\} \quad \text{and} \quad
    I^1_v := \lceil \log_2(R^1_v) \rceil .
\end{equation}
Next, we define
\begin{equation}\label{eq:I}
    I_v := \max \big\{ I^1_u : u \in \Z^d,~ v \in Q_{I^1_u}(u) \big\} \quad \text{and} \quad
    R_v := 2^{I_v} .
\end{equation}
Finally, we let $\cD$ consist of all $Q_{I_v}(v)$, $v \in \Z^d$ (without repetition).

\begin{lemma}\label{lem:construction of D}
    If $R_v$ is finite for all $v$, then $\cD$ is a regular cover satisfying \eqref{eq:D-prop1} and \eqref{eq:D-prop2}.
\end{lemma}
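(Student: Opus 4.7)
The plan is to verify three properties of $\cD$: (a) it is a regular cover of $\R^d$; (b) each box satisfies the crossing bound \eqref{eq:D-prop1}; and (c) intersecting boxes have comparable diameters as in \eqref{eq:D-prop2}. The central geometric fact used throughout is that two dyadic boxes are either interior-disjoint or nested.

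The key auxiliary claim is a \emph{local constancy} property of the field $(I_v)$: if $u, v \in \Z^d$ and $v \in Q_{I_u}(u)$, then $I_v = I_u$. The inequality $I_v \ge I_u$ is immediate from \eqref{eq:I} (take $w = u$ in the max). Conversely, if $I_v > I_u$, there is some $w$ with $v \in Q_{I^1_w}(w)$ and $I^1_w = I_v > I_u$; dyadic nesting then gives $Q_{I^1_w}(w) \supset Q_{I_u}(u) \ni u$, contradicting $I_u \ge I^1_w$ in the max defining $I_u$. From this, the boxes $Q_{I_v}(v)$ are pairwise interior-disjoint, partition $\Z^d$, and cover $\R^d$ since each $x \in \R^d$ lies in the level-$0$ box around a nearest lattice point $v$, which is contained in $Q_{I_v}(v) \in \cD$.

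For \eqref{eq:D-prop2}, suppose $D = Q_{I_v}(v)$ and $D' = Q_{I_u}(u)$ intersect with $I_v \ge I_u$. If their interiors meet, nesting plus local constancy forces $D = D'$. Otherwise they are boundary-adjacent, so $\|u - w\|_\infty \le 2^{I_v} + 2^{I_u}$ for any lattice points $u \in D', w \in D$. Pick $w \in D$ realizing $I^1_w = I_v$, so $R^1_w > 2^{I_v - 1}$. The $\tfrac14$-Lipschitz property of $R^1$ (immediate from \eqref{eq:I_1}) yields $R^1_u > 2^{I_v-1} - \tfrac14(2^{I_v}+2^{I_u})$, and combined with the upper bound $R^1_u \le 2^{I^1_u} \le 2^{I_u}$ (from $I^1_u \le I_u$), this forces $I_v - I_u$ to be bounded by an absolute constant, giving \eqref{eq:D-prop2}.

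For \eqref{eq:D-prop1}, the strategy is to locate a lattice point $u^* \in D \cap \Z^d$ with $I^0_{u^*} = I_v$; the definition of $I^0$ together with local constancy would then give $|\cC(D)| = |\cC(Q_{I_v}(u^*))| \le q_{I_v}$. To produce $u^*$, I would unwind the definitions: some $w \in D$ achieves $I^1_w = I_v$, and the max in \eqref{eq:I_1} is attained by some $u^{**}$ with $R^0_{u^{**}} - \tfrac14 \|u^{**} - w\|_\infty > 2^{I_v - 1}$. Since $R^0_{u^{**}}$ is a power of $2$, this forces $I^0_{u^{**}} \ge I_v$. The \textbf{main obstacle}, and the step where the argument is most delicate, is showing that $u^{**}$ actually lies in $D$ and that $I^0_{u^{**}} = I_v$ exactly, rather than being strictly larger. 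I expect this to follow by pairing the lower bound above with the upper bound $R^1_w \le 2^{I_v}$: the latter caps how large $R^0_{u^{**}}$ can be as a function of $\|u^{**}-w\|_\infty$, and together with the dyadic rigidity of $R^0$ this should pin $R^0_{u^{**}} = 2^{I_v}$ and place $u^{**}$ in $D$.
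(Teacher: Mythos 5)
Your arguments for regular-coverness and for the local constancy of $(I_v)$ are sound and essentially match the paper's. Your argument for \eqref{eq:D-prop2} only yields $I_v\le I_u+O(1)$ rather than $I_v\le I_u+1$ (the paper gets the sharp constant by comparing $R^1$ at two lattice points at distance $1$ across the shared face); this is harmless for the application but does not literally give \eqref{eq:D-prop2} as stated. The genuine gap is in \eqref{eq:D-prop1}.

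Your strategy there is to find $u^*\in D\cap\Z^d$ with $I^0_{u^*}=I_v$ and conclude from the definition of $I^0$. No such point need exist. In \eqref{eq:I_1} the maximum defining $R^1_w$ can be attained at a point $u^{**}$ arbitrarily far from $w$: one only needs $R^0_{u^{**}}-\tfrac14\|u^{**}-w\|\in(2^{I_v-1},2^{I_v}]$, which is compatible with $R^0_{u^{**}}$ being enormous and $u^{**}$ lying far outside $D$. For instance (in $d=1$), if $I^0_u=0$ for all $u$ except one distant site $z$ with $R^0_z=2^{10}$ and $\|z\|=4000$, then $R^1_0\ge 24$, so $I_0\ge 5$, yet every lattice point of $Q_{I_0}(0)$ has $I^0=0$. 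So the value $I_v$ can be inherited purely through the smoothing step, with $I^0_u<I_v$ for every $u\in D$. In that situation your proposed identity $|\cC(D)|=|\cC(Q_{I_v}(u^*))|\le q_{I_v}$ has nothing to latch onto; note also that the set $\{i:|\cC(Q_i(v))|\le q_i\}$ is a priori not upward closed, so knowing the crossing bound at the level $I^0_u$ of each individual $u$ does not directly give it at the larger level $I_v$.

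The paper closes this gap by proving a stronger statement by induction on the scale: $|\cC(Q_i(v))|\le q_i$ whenever $I^0_u\le i$ for \emph{all} $u\in Q_i(v)$ (which holds for $i=I_v$ since $I^0_u\le I^1_u\le I_v$ on $D$). The induction step splits into two cases: if some $u\in Q_i(v)$ has $I^0_u=i$, the bound is immediate from the definition of $I^0_u$; otherwise all $I^0_u\le i-1$, and one uses that any vertex crossing $Q_i(v)$ must cross one of its $2^d$ dyadic children, so $|\cC(Q_i(v))|\le 2^d q_{i-1}=q_i$ by the inductive hypothesis. This subadditivity over dyadic children is the idea missing from your proposal, and it is exactly what handles the case where $I_v$ is produced by smoothing rather than by a local crossing count.
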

\begin{proof}
Let us first check that $\cD$ is a regular cover. Clearly, the union of the sets in $\cD$ is all of $\R^d$, since for $v\in \mathbb Z ^d$ we have $v+[-\frac12,\frac12]^d=Q_0(v) \subseteq Q_{I_v}(v)$. Let us check that each element of $\Z^d$ is contained in a unique $D \in \cD$ (since all our boxes are unions of sets of the form $v+[-\frac12,\frac12]^d$ with $v \in \Z^d$, this is actually equivalent to the interiors of the $D \in \cD$ being pairwise disjoint). Indeed, if the interiors of $Q_{I_v}(v)$ and $Q_{I_w}(w)$ intersect, then since all our boxes are diadic, one must be contained in the other, say, $Q_{I_v}(v) \subset Q_{I_w}(w)$. However, using \eqref{eq:I} we have that $I_w=I^1_u$ for some $u \in \Z^d$ with $w \in Q_{I^1_u}(u)$, then also $v \in Q_{I^1_u}(u)$ so that $I_v \ge I_w$, and hence $I_v=I_w$ and $Q_{I_v}(v)=Q_{I_w}(w)$.

Let us now show \eqref{eq:D-prop2} holds, namely, that $\diam D \le 2 \diam D'$ whenever $D,D' \in \cD$ intersect. Since $D=Q_{I_v}(v)$ for all $v \in D \cap \Z^d$, and similarly for $D'$, we may choose $v,w \in \Z^d$ such that $D=Q_{I_v}(v)$, $D'=Q_{I_w}(w)$ and $\|v-w\|=1$. We need to show that $I_v \le I_w+1$. By \eqref{eq:I}, there is $u \in \Z^d$ such that $v \in Q_{I^1_u}(u)$ and $I_v=I^1_u$. We obtain $\|u-w\| \le \|u-v\|+1 \le 2^{I^1_u}\le 2R_u^1$, where the last inequality is by \eqref{eq:I_1}. We need to show that $I_v=I^1_u \le I_w+1$. Since $I_w \ge I^1_w$, it suffices to show that $I^1_u \le I^1_w+1$. To this end, it suffices to show that $\log_2(R^1_u) \le \log_2(R^1_w)+1$, or equivalently, $R^1_u \le 2R^1_w$.
Since $\|u-w\| \le 2R^1_u$ (as shown above), this will follow if we prove that $R^1_u - R^1_w \le \frac14\|u-w\|$. To this end, recall that by \eqref{eq:I_1} there exists $z \in \Z^d$ such that $R^1_u = R^0_z-\frac14\|z-u\|$, and also $R^1_w \ge R^0_z-\frac14\|z-w\|$. Thus, by the triangle inequality
\[ R^1_u - R^1_w \le \tfrac14\|z-w\| - \tfrac14\|z-u\| \le \tfrac14 \|u-w\| .\]
Finally, we show the \eqref{eq:D-prop1} holds, namely, that $|\cC(D) \le |D \cap \Z^d|$ for all $D \in \cD$.
We prove something stronger, namely, that $|\cC(Q_i(v))| \le q_i$ for all $v \in \Z^d$ and $i \ge 0$ for which $I^0_u \le i$ for all $u \in Q_i(v)$. This is indeed stronger since $I^0_u \le I^1_u \le I_v$ for all $u \in Q_{I_v}(v)$.
We prove the claim by induction on $i$. The base case $i=0$ is trivial from the definition of $I^0_v$ in \eqref{eq:I_0}.
Suppose that $i \ge 1$ and $v \in \Z^d$ are such that $I^0_u \le i$ for all $u \in Q_i(v)$.
If $I^0_u = i$ for some $u \in Q_i(v)$, then $|\cC(Q_i(v))| = |\cC(Q_i(u))| \le q_i$ by the definition of $I^0_u$. Otherwise, $I^0_u \le i-1$ for all $u \in Q_i(v)$, in which case by the inductive hypothesis we have that $|\cC(Q_{i-1}(u))| \le q_{i-1}$ for all $u \in Q_i(v)$.
Observe that if a vertex crosses $Q_i(v)$, then it crosses some $Q_{i-1}(u) \subset Q_i(v)$ with $u \in Q_i(v)$. In particular, $$\cC(Q_i(v)) \subset \bigcup_{u \in Q_i(v)} \cC(Q_{i-1}(u)) \, .$$ Since $Q_i(v)$ consists of $2^d$ cubes $Q_{i-1}(u)$ whose interiors are pairwise disjoint, we obtain that $|\cC(Q_i(v))| \le 2^d q_{i-1} = q_i$. 
\end{proof}

\begin{corollary}\label{cor:match}
Suppose that $R_v<\infty$ for all $v \in \Z^d$. Then there exists a matching $M \colon \Z^d \to \Pi$ such that $\|M(v)-v\| \le 3R_v$ for all $v \in \Z^d$.
\end{corollary}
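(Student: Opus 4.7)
The plan is to deduce the corollary by chaining together the two main tools built up in \cref{sec:deterministic_matching_condition,sec:regular-cover}: first use \cref{lem:construction of D} to produce a regular cover with the required crossing-counting property, then feed that cover into \cref{prop:deterministic} to obtain a matching $M$ that respects neighborhoods, and finally convert the neighborhood constraint into the pointwise distance bound $\|M(v)-v\| \le 3R_v$ using the diameter comparison \eqref{eq:D-prop2}.

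More concretely, assuming $R_v<\infty$ for all $v \in \Z^d$, \cref{lem:construction of D} guarantees that the collection $\cD$ defined in \cref{sec:regular-cover} is a regular cover that satisfies both \eqref{eq:D-prop1} and \eqref{eq:D-prop2}. In particular, the crossing bound $|\cC(D)| \le |D \cap \Z^d|$ holds for every $D \in \cD$, which is exactly the hypothesis of \cref{prop:deterministic}. Applying that proposition produces a matching $M \colon \Z^d \to \Pi$ with the property that $M(v) \in N_v$ for every $v \in \Z^d$, where $N_v$ is the union of all $D \in \cD$ that meet $D_v$.

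It remains to translate $M(v) \in N_v$ into the quantitative bound. Since $D_v = Q_{I_v}(v)$ is a box of side length $2^{I_v} = R_v$ (recall that our boxes are dyadic blocks of the form $k+[-\tfrac12,2^i-\tfrac12]^d$), its $\ell^\infty$-diameter equals $R_v$. By \eqref{eq:D-prop2}, any $D \in \cD$ intersecting $D_v$ has $\diam D \le 2\diam D_v = 2R_v$. Picking some $D \in \cD$ with $M(v) \in D$ and $D \cap D_v \neq \emptyset$, together with an arbitrary $z \in D \cap D_v$, the triangle inequality yields
\[ \|M(v)-v\|_\infty \le \|M(v)-z\|_\infty + \|z-v\|_\infty \le \diam D + \diam D_v \le 3R_v, \]
which is the desired bound.

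There is no real obstacle here: everything reduces to checking that the metric defect introduced by passing from $M(v) \in D_v$ to $M(v) \in N_v$ is controlled by a constant multiple of $\diam D_v$, and this is precisely what \eqref{eq:D-prop2} was designed to supply. The only subtlety worth flagging is the computation of $\diam D_v$ in the $\ell^\infty$ metric, which is where the factor $3$ (rather than, say, $2$) enters; this is also the place where working in $\ell^\infty$, as noted at the start of the section, simplifies the bookkeeping.
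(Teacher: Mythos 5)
Your proof is correct and follows exactly the paper's argument: invoke \cref{lem:construction of D} to get a regular cover satisfying \eqref{eq:D-prop1} and \eqref{eq:D-prop2}, feed it into \cref{prop:deterministic} to get $M(v)\in N_v$, and then use \eqref{eq:D-prop2} together with the triangle inequality to bound $\|M(v)-v\|$ by $\diam D + \diam D_v \le 2R_v + R_v = 3R_v$. The only difference is that you spell out the final triangle-inequality step in slightly more detail than the paper does.
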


\begin{proof}
By \cref{lem:construction of D}, $\cD$ satisfies \eqref{eq:D-prop1} and \eqref{eq:D-prop2}. By \eqref{eq:D-prop1} and \cref{prop:deterministic}, there is a matching $M:\mathbb Z ^d \to \Pi $ such that for all $v\in \mathbb Z ^d$ we have that $M(v)\in N_v$, where $N_v$ is the union of the box $Q_{I_v}(v)$ and the boxes in $\cD$ intersecting this box. By \eqref{eq:D-prop2}, the boxes that intersect $Q_{I_v}(v)$ are of side length at most $2R_v$, and therefore $\|M(v)-v\|\le 3R_v$. 
\end{proof}

\subsection{Probabilistic estimates}
\label{sec:cover-prob-estimates}

We now address the probabilistic part.
Suppose that $(\Pi_v)_{v \in \Z^d}$ is a perturbed lattice with perturbation law $\xi$. Recall that $B_r:=\{x\in\mathbb R ^d :\|x\|\le r\}$ is the $\ell ^{\infty }$ ball of radius $r$ and recall that
\[ h(r) := \P(\Pi \cap B_r = \emptyset) \qquad\text{and}\qquad p(r) := \P(\|\xi\| \ge r) .\]
We begin by finding the asymptotics of the hole probability $h(r)$.

\begin{lemma}[hole probability]
\label{lemma:hole_probability_Z_d}
There are constants $C,c>0$ such that for all $r$ large enough,
\[ p(r)^{Cr^d} \le h(r) \le p(r)^{cr^d}.\]
\end{lemma}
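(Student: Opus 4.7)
The plan is to prove the upper and lower bounds separately via independent-events arguments indexed by $\Z^d$, using assumption~\eqref{eq:assumption ii} to pass between $p$ at different scales.

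For the upper bound, on the hole event every $v \in \Z^d \cap B_{r/2}$ must satisfy $v + \xi_v \notin B_r$, which by the triangle inequality forces $\|\xi_v\|_\infty > r/2$. Since the $\xi_v$ are i.i.d., independence gives $h(r) \le p(r/2)^{N}$ with $N = |\Z^d \cap B_{r/2}| \ge c r^d$. Applying~\eqref{eq:assumption ii} with $k = 2$ yields $p(r/2)^K \le p(r)$ for some $K \ge 1$, and therefore $h(r) \le p(r)^{c r^d / K}$, as desired.

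For the lower bound, the plan is to exhibit an independent family of events $(E_v)_{v \in \Z^d}$ whose simultaneous occurrence forces the hole event. Split $\Z^d = V_1 \cup V_2$ with $V_1 := \Z^d \cap B_{2r}$. For $v \in V_1$ set $E_v := \{\|\xi_v\|_\infty \ge 4r\}$, so that $\|v + \xi_v\|_\infty \ge 2r > r$. For $v \in V_2$ set $E_v := \{v + \xi_v \notin B_r\}$ directly. Since each $E_v$ depends only on $\xi_v$, these events are independent, and
\[
h(r) \ge p(4r)^{|V_1|} \cdot \prod_{v \in V_2} (1 - q_v), \qquad q_v := \P(v + \xi_v \in B_r).
\]
The first factor is bounded below by $p(r)^{C r^d}$ using~\eqref{eq:assumption ii}.

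The main obstacle is controlling the infinite product $\prod_{v \in V_2}(1 - q_v)$: the crude bound $q_v \le p(\|v\|_\infty - r)$ followed by summation over $V_2$ diverges for polynomial perturbations with $1 < \alpha \le d$. The resolution is a Fubini-type double count, using that $v + \xi_v \in B_r$ is equivalent to $v \in B_r - \xi_v$, so
\[
\sum_{v \in \Z^d} q_v = \E|\Pi \cap B_r| = \int |\Z^d \cap (B_r - x)|\, d\mu(x) \le (2r+1)^d,
\]
where $\mu$ is the law of $\xi$; this bound depends only on the $\ell^\infty$-diameter of $B_r$, not on the tail of $\xi$. Combined with $q_v \le p(r) \le 1/2$ for $v \in V_2$ and $r$ large, so that $\log(1-q_v) \ge -2 q_v$, this yields $\prod_{v \in V_2}(1 - q_v) \ge \exp(-C r^d)$. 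Since $p(r) \le 1/e$ for $r$ large, $\exp(-Cr^d) \ge p(r)^{C r^d}$, and combining the two factors gives $h(r) \ge p(r)^{C' r^d}$, completing the proof.
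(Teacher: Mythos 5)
Your proof is correct and follows essentially the same route as the paper: the upper bound by restricting to perturbations of lattice points in $B_{r/2}$, and the lower bound by splitting at $B_{2r}$, handling the far points via $1-q_v \ge e^{-2q_v}$ together with $\sum_v q_v = \E|\Pi\cap B_r| \le Cr^d$, and invoking~\eqref{eq:assumption ii} to compare $p$ at different scales. The only cosmetic differences are the choice of threshold ($4r$ versus the paper's $3r$) and your explicit Fubini computation of $\E|\Pi\cap B_r|$, which the paper states directly.
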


\begin{proof}
We have that
\begin{equation}\label{eq:product}
    h(r)=\prod _{v\in \mathbb Z ^d}\mathbb P (v+\xi \notin B_r).
\end{equation}
    To obtain the upper bound observe that
    \begin{equation*}
        h(r) \le \prod _{v\in B_{r/2}} \mathbb P \big( v+\xi  \notin B_r \big) \le p(r/2)^{|B_{r/2}|}\le p(r)^{cr^d},
    \end{equation*}
    where in the last inequality we used our assumption~\eqref{eq:assumption ii}.
    
    Next, we turn to prove the lower bound. To this end, we estimate the product in \eqref{eq:product} separately when $v\in B_{2r}$ and when $v\notin B_{2r}$. We have, using~\eqref{eq:assumption ii} again,
    \begin{equation}\label{eq:2.6}
        \prod _{v\in B_{2r}}\mathbb P \big( v+\xi \notin B_r \big) \ge p(3r)^{|B_{2r}|}\ge p(r)^{Cr^d}. 
    \end{equation}
    Next, note that for large $r$ and $v\notin B_{2r}$ we have that $\mathbb P (v+\xi \in B_r) \le p(r)\le 1/2$. Thus, using the inequality $1-x \ge e^{-2x}$ for $x \in [0,\frac12]$ we obtain
    \begin{equation}\label{eq:2.7}
        \prod_{v \notin B_{2r}} \P(v+\xi \notin B_r)
      \ge \exp \bigg( -2\sum_{v \notin B_{2r}} \P (v+\xi \in B_r) \bigg) \ge e^{-Cr^d},
    \end{equation}
    where in the last inequality we used that $\sum _{v\in \mathbb Z ^d}\mathbb P (v+\xi \in B_r)=\mathbb E |\Pi \cap B_r| \le Cr^d$.
    Substituting \eqref{eq:2.6} and \eqref{eq:2.7} into \eqref{eq:product} finishes the proof of the lower bound on $h(r)$.
\end{proof}

We now return to our construction and show that it yields a matching whose matching distance has optimal tail behavior, namely, the same as that of the hole probability.

\begin{lemma}\label{lem:tail}
We have that $R_v<\infty$ almost surely, and moreover, for all $r$ large enough,
\[ \P(R_v > r) \le h(r)^c.\]
\end{lemma}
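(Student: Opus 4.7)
The plan is to bound $\P(R^0_v > r)$ in terms of $h(r)$ and then transfer to $R_v$ by a union bound. Let $i := \lfloor \log_2 r \rfloor$ and $D := Q_i(v)$, a box of side $s := 2^i \in [r/2, r]$ with $q_i = s^d$ lattice points. On the event $\{R^0_v > r\}$ we have $|\cC(D)| > q_i$. I split crossings by the size of the perturbation: with threshold $T := s/(8d)$, set
\[
\cC_s(D) := \{u \in \cC(D) : \|\xi_u\| < T\},\qquad \cC_l(D) := \cC(D) \setminus \cC_s(D).
\]
If $u \in \cC_s(D)$ then $\dist(u,\partial D) < T$, so $u$ lies in a shell of width $T$ around $\partial D$. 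A direct volume count gives $|\cC_s(D)| \le 8dT s^{d-1} \le s^d/2 = q_i/2$, and therefore $\{|\cC(D)| > q_i\} \subseteq \{|\cC_l(D)| \ge q_i/2\}$.

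The random variable $|\cC_l(D)|$ is a sum of independent Bernoullis, since each summand depends only on $\xi_u$. The key step is to estimate its expectation via a \emph{swept-region} bound:
\[
\E|\cC_l(D)| \;\le\; \E \Bigl[\mathbf{1}_{\|\xi\|\ge T}\cdot \bigl|\{u \in \Z^d : [u,u+\xi]\cap D \ne \emptyset\}\bigr|\Bigr],
\]
where the set inside the cardinality is the Minkowski sum $D + [-\xi, 0]$, and basic geometry (a cube plus a segment) gives $|\Z^d \cap (D + [-y,0])| \le C_d(s^d + s^{d-1}\|y\|)$ for any $y \in \R^d$. Plugging this in and using~\eqref{eq:assumption i} in the form $\E[\|\xi\|\mathbf{1}_{\|\xi\|\ge T}] \le CTp(T)$, we obtain $\E|\cC_l(D)| \le C' s^d p(T)$. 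Chernoff's inequality for sums of independent Bernoullis combined with~\eqref{eq:assumption ii} (which yields $p(T) \le p(r)^{c_1}$) then gives
\[
\P(R^0_v > r) \le \P\bigl(|\cC_l(D)| \ge q_i/2\bigr) \le \bigl(2e\E|\cC_l(D)|/q_i\bigr)^{q_i/2} \le p(r)^{c_2 r^d} \le h(r)^{c_3},
\]
where the last inequality uses the upper bound $h(r) \le p(r)^{cr^d}$ from \cref{lemma:hole_probability_Z_d}.

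To deduce the bound on $R_v$, I would unwind the definitions~\eqref{eq:I_1}--\eqref{eq:I}. If $R_v > r$ then $R_v = 2^{I^1_u}$ for some $u$ with $v \in Q_{I^1_u}(u)$, and $R^1_u = R^0_z - \|z-u\|/4$ for some $z$. A short triangle-inequality computation then gives $R^0_z \ge R_v/2 > r/2$ and $\|z-v\| \le 4R^0_z$. Union bounding over $z \in \Z^d$ and using translation invariance,
\[
\P(R_v > r) \;\le\; Cr^d\,\P(R^0_0 \ge r/2) + \sum_{k > 2r} Ck^{d-1}\,\P(R^0_0 \ge k/4),
\]
and both terms are bounded by $h(r)^c$ using the just-derived tail bound on $R^0$ together with the fact that $h$ decays super-polynomially (the polynomial factors in $r$ and $k$ being absorbed via~\eqref{eq:assumption ii}). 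Almost sure finiteness of $R_v$ is an immediate consequence.

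The main obstacle is the expectation bound $\E|\cC_l(D)| \le Cs^d p(T)$. The naive estimate $\sum_u \P(\|\xi_u\|\ge \dist(u,D))$ diverges for polynomial perturbations with $\alpha \in (1,d]$, since $\int k^{d-1}p(k)\,dk$ is infinite in that range, yet the target tail must still be $h(r)^c$. The swept-region argument circumvents this by exchanging the sum over $u$ with the expectation over $\xi$: the number of lattice points swept by the segment is controlled \emph{pathwise} by $C(s^d + s^{d-1}\|\xi\|)$, reducing matters to the finite quantity $\E[\|\xi\|\mathbf{1}_{\|\xi\| \ge T}]$ that is controlled by~\eqref{eq:assumption i}.
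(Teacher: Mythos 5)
Your proof is correct and follows the same overall architecture as the paper's: a tail bound on $R^0_v$ via a bound on $\P(|\cC(Q_i(v))|>q_i)$, whose heart is the swept-region (Minkowski sum) estimate $\E\bigl[\1_{\|\xi\|\ge T}\,|\Z^d\cap(D+[-\xi,0])|\bigr]\le C(s^dp(T)+s^{d-1}\E[\|\xi\|\1_{\|\xi\|\ge T}])$ combined with \eqref{eq:assumption i}, followed by a Chernoff bound, and then union bounds to pass from $R^0$ to $R$. You correctly identify the swept-region step as the crux; it is exactly the paper's bound on $\E[N_1]$. Where you genuinely differ is in the decomposition of $\cC(D)$: the paper splits the crossing points by \emph{location} (points deep inside $D$, points far outside $D$, and a boundary shell of at most $q_i/2$ points), handling the deep-inside points by a union bound over subsets ($2^{r^d}p(\eps r)^{r^d/4}$) and the far-outside points by the expectation--MGF argument. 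You instead split by \emph{perturbation size}: crossings with $\|\xi_u\|<T$ are deterministically confined to a shell around $\partial D$ and hence number at most $q_i/2$, while all crossings with $\|\xi_u\|\ge T$ (inside or outside) form a single sum of independent Bernoullis controlled by one swept-region expectation bound plus Chernoff. This is a mild but real simplification: it merges the paper's two probabilistic estimates into one, at no cost in hypotheses.

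Two small corrections. First, your shell count does not close numerically as stated: with $T=s/(8d)$ the bound $8dTs^{d-1}$ equals $s^d$, not $s^d/2$; just take $T=\eps s$ with $\eps=\eps(d)$ small enough, as the paper does. Second, the final step $p(r)^{c_2r^d}\le h(r)^{c_3}$ requires the \emph{lower} bound $h(r)\ge p(r)^{Cr^d}$ from \cref{lemma:hole_probability_Z_d}, not the upper bound you cite (the upper bound goes the wrong way). Neither issue affects the validity of the argument.
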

\begin{proof}
We first bound the tail of $R^0_v$. Let $Q_r:=z+[0,r]^d\subseteq \mathbb R^d$ where $z\in (1/2,\dots ,1/2)+\mathbb Z ^d$, be a box of side length $r$. We would like to bound $\P (|\cC(Q_r)|>r^d)$. Let
\begin{equation*}
    L_0:= \big\{ u\in \mathbb Z ^d :d(u,Q_r^c)\ge \eps r \big\}\quad \text{and}\quad L_1:= \big\{ u\in \mathbb Z ^d :d(u,Q_r)\ge \eps r \big\},
\end{equation*}
where $\eps=\eps(d)>0$ is a constant depending only on $d$, chosen so that $|\mathbb Z ^d \setminus (L_0\cup L_1)| \le r^d/2$. Letting $N_i := |\cC(Q_r) \cap L_i|$ we have that $|\cC(Q_r)| \le r^d/2+N_0+N_1$ and therefore
\begin{equation}\label{eq:2.9}
   \P \big(|\cC(Q_r)|>r^d\big) \le \P \big(N_0>  r^d/4\big) + \P \big(N_1> r^d/4\big) . 
\end{equation}
We bound the two terms on the right-hand side on~\eqref{eq:2.9} separately. Starting with $N_0$, we sum over subsets $A\subseteq L_0$ with $|A|\ge r^d/4$ and obtain that
\begin{equation}\label{eq:2.10}
    \mathbb P \big(N_0\ge r^d/4\big) \le \sum _{A\subseteq L_0} \mathbb P \big(\forall v\in A, \ v+\xi _v  \notin B_r\big) \le 2^{r^d} p(\eps r)^{r^d/4} \stackrel{\eqref{eq:assumption ii}}{\le} p(r)^{cr^d},
\end{equation}
for $r$ sufficiently large.
We turn to bound the probability that $N_1$ is large, and start by bounding its expectation. We have
\begin{align*}
    \E[N_1] & = \sum _{v\in L_1} \P \big(v\in \mathcal C(Q_r)\big) \\ &= \sum _{v\in L_1} \P \big(0\in \mathcal C(-v+Q_r)\big) \le \E \big[ \big| \{v\in L_1 : (-v+Q_r)\cap [0,\xi ]\neq \emptyset \}\big|\big] \, .
\end{align*}
where here $[0,\xi ]\subseteq \mathbb R ^d$ is the interval connecting $0$ and $\xi $. The random variable inside the last expectation is zero when $\|\xi \|<\eps r$ since $d(L_1,Q_r)\ge \eps r$. Moreover, when $\|\xi \|\ge \eps r$, the number of boxes of the form $-v+Q_r$ with $v\in \mathbb Z ^d$ that intersect $[0,\xi]$ is at most $Cr^{d-1}\|\xi \|$, and we get
\begin{equation*}
    \E[N_1]\le Cr^{d-1} \mathbb E \big[\|\xi \|\mathds 1_{\|\xi \|\ge \eps r}\big] \le Cr^dp(r)^c.
\end{equation*}
In the last inequality, we used both our assumptions~\eqref{eq:assumption i} and~\eqref{eq:assumption ii}. 
Let us now bound the moment generating function of $N_1$. For any $\lambda >0$, we have
\begin{align}\label{eq:generating}
    \nonumber \E[e^{\lambda N_1}] &= \prod_{v \in L_1} \E[e^{\lambda \mathds{1}_{\{v \in \cC(Q_r)\}}}]  \\ &\le  \prod_{v \in L_1} \big[1+e^{\lambda }\mathbb P (v\in \mathcal C(Q_r)) \big] \le \exp\big(e^\lambda \E N_1 \big) \le \exp\big( Ce^\lambda r^{d}p(r)^c \big) .
\end{align}
Hence, taking $\lambda=\log (1/p(r)^c)$ where $c$ is the constant from the right-hand side of \eqref{eq:generating}, we obtain
\begin{equation}\label{eq:2.11}
   \P \big(N_1 > r^d/4\big) \le e^{-\lambda r^d/4} \, \mathbb E [e^{\lambda N_1}] \le 
\exp\big(-c\log (1/p(r))r^d +Cr^d\big) \le p(r)^{cr^d} \, . 
\end{equation}
Substituting \eqref{eq:2.10} and \eqref{eq:2.11} into \eqref{eq:2.9}, we arrive at the bound $$\P \big(|\cC(Q_r)|>r^d\big) \le p(r)^{cr^d} \, .$$ 
In view of~\eqref{eq:I_0}, we can take $r=2^{i}$ in the above and obtain
\[ \P\big(R^0_v > r \big) \le \P \big(|\cC(Q_i(v))|>r^d\big) \le p(r)^{cr^d}  \, , \]
for any $v\in \mathbb Z ^d$. Next,  
we turn to bound the tail of $R^1_v$. Using \eqref{eq:I_1}, we obtain
\[ \P\big(R^1_v > r\big) \le \sum_{ u\in \Z^d} \P\big(R^0_u > r + \tfrac14 \|u-v\|\big) \le \sum_{n=0}^\infty Cn^{d-1} \P\big(R^0_v > r + \tfrac14 n\big) \,  .\]
Since $p(r)$ is decreasing in $r$, the bound above gives
\[ \P\big(R^1_v > r\big) \le \sum_{n=0}^\infty Cn^{d-1} p(r)^{c(r+\frac14 n)^d} \le p(r)^{cr^d} \, ,\]
for all $r$ large enough. Finally,~\eqref{eq:I} yields that
\begin{align*}
 \P\big(R_v > r\big) &\le \sum_{u \in \Z^d} \P\big(R^1_u > \tfrac12 r,~ \|v-u\| \le R^1_u\big) \\ &\le \sum_{n=0}^\infty Cn^{d-1} \P\big(R^1_u > \min\{\tfrac12 r, n\}\big) \le p(r)^{cr^d} \, .
\end{align*}
In view of Lemma~\ref{lemma:hole_probability_Z_d}, the proof is complete.
\end{proof}

\begin{remark}\label{rem:assumptions}
    Following the above proof, we observe that without our assumptions~\eqref{eq:assumption i} and~\eqref{eq:assumption ii}, we obtain the bound $\P (\|M(v)-v\| \ge r) \le (E(cr)/r)^{cr^d}$, where $E(r) := \E[\|\xi\|\1_{\|\xi\| \ge r}]$.
    The assumptions are used in order to show that this bound is comparable to the hole probability $h(r)$, which for general distributions $\xi$, might not be the case. 
\end{remark}

\subsection{Proof of \cref{thm:invariant_matching_Z_d}}
\label{sec:thm-proof}

It follows from \cref{cor:match,lem:tail} that there exists a random matching $M$ between $\Z^d$ and $\Pi$ such that
\[ \P (\|M(v)-v\| \ge r) \le \mathbb P(3R_v\ge r) \le h(r)^c \qquad\text{for all }v \in \Z^d .\]
This almost gives \cref{thm:invariant_matching_Z_d}. All that is missing is the invariance of the matching and that the matching is perfect. The invariance can be obtained via a standard averaging argument, which we now explain.
Let $\mu$ be the probability measure on $(\R^d)^{\Z^d}$ corresponding to the above matching $M$.
For $n \ge 1$, define $\mu_n := \frac1{n^d} \sum_{v \in [n]^d} T_v \mu$, where $T_v$ is translation by $v$.
Observe that the sequence of measures $\{\mu_n\}_{n=1}^\infty$ is tight, since the tail bound on $\|M(v)-v\|$ is independent of $v$. Thus, there is a convergent subsequence, $\nu := \lim_{k \to \infty} \mu_{n_k}$. Finally, $\nu$ is invariant since for any given $u \in \Z^d$, we have that $T_u \nu = \lim_{k \to \infty} T_u \mu_{n_k} = \nu$, since $\|T_u \mu_n - \mu_n\| \le |[n]^d \setminus (u+[n]^d)| \cdot n^{-d}$.
This gives an invariant matching $M':\mathbb Z ^d\to \Pi $ with the same tail decay as $M$.  Next, we explain why such a matching $M'$ has to be perfect, that is, onto $\Pi$. For $v\in \Z^d$ we write $\Pi_v = v + \xi_v$ for the perturbed lattice point, and with a slight abuse of notation, identify $\Pi$ with the collection $(\Pi_v)_{v\in \Z^d}$. Then the function $\Upsilon:\Z^d\times \Z^d\to[0,1]$ given by
\[
\Upsilon(u,v) = \P\big(M'(u) = \Pi_v\big) 
\]
is diagonally invariant, meaning that $\Upsilon(u,v) = \Upsilon(u+w,v+w)$ for all $u,v,w\in \Z^d$. By the mass transport principle (see, for instance,~\cite[Lemma~8]{Holroyd-Pemantle-Peres-Schramm}) we have
\[
\sum_{v\in \Z^d} \Upsilon(u,v) = \sum_{v\in \Z^d} \Upsilon(v,u)\, , 
\]
which, since $M^\prime$ is a matching, implies that
\begin{equation*}
    1 = \E\Big[\sum_{v\in \Z^d} \mathbf{1}_{\{M^\prime(u) = \Pi_v\}} \Big] = \E\Big[\sum_{v\in \Z^d} \mathbf{1}_{\{M^\prime(v) = \Pi_u\}} \Big] = \P\big(\Pi_u\in M^\prime(\Z^d)\big) \, .
\end{equation*}
Since $\Pi$ is countable we get that almost surely $M^\prime$ is a perfect matching, which completes the proof of the theorem.
% The matching $M'$ has to be perfect, since otherwise, by ergodicity, there is a positive density unmatched points of $\Pi $. This is impossible as the density of matched points of $\Pi $ must be equal to the density of $\mathbb Z ^d$, by a simple mass transport argument {\color{red} Oren: maybe instead of mass-transport we can simply write "by Fubini"? Should we explain this point better?}. The completes the proof of the theorem. 
\qed

\section{Polynomial perturbations in $d=1$}
\label{sec:polynomial_in_d_1}

In this section we prove \cref{thm:d=1}.
This, we consider the point process $\Pi = \{v+\xi_v\, :\, v\in \Z \}$, where $(\xi_v)$ are i.i.d.\ perturbations with $\P[|\xi| \ge r] \simeq  r^{-\alpha}$ for some $\alpha\in(0,1)$. We start with the following simple lemma, in which we estimate the variance of the number of points in the interval $[0,t)$.

\begin{lemma}\label{lemma:d=1_variance}
    There are constants $C,c>0$ such that for all integers $t\ge 1$,
    \[
    ct^{1-\alpha } \le \var\big(\Pi[0,t) \big) \le Ct^{1-\alpha}.
    \]
\end{lemma}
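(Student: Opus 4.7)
The plan is to exploit the independence of the perturbations. Writing $\Pi[0,t) = \sum_{v \in \Z} \mathbf{1}_{v + \xi_v \in [0,t)}$ as a sum of independent Bernoulli indicators, one obtains
\[ \var(\Pi[0,t)) = \sum_{v \in \Z} p_v(1-p_v), \qquad p_v := \P\bigl(\xi \in [-v, t-v)\bigr). \]
The two bounds then reduce to estimating this sum.

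For the upper bound, I would use $p_v(1-p_v) \le \min(p_v, 1-p_v)$ and split the sum into $v \in \{0,1,\dots,t-1\}$ and $v$ outside this range. For $v$ inside, $1-p_v \le \P(\xi \ge t-v) + \P(\xi < -v) \lesssim (t-v)^{-\alpha} + v^{-\alpha}$ (with the convention that boundary terms are $O(1)$), and summing gives the desired $O(t^{1-\alpha})$ contribution since $\alpha<1$. For $v$ outside $[0,t)$, the naive bound $p_v \lesssim \dist(v,[0,t))^{-\alpha}$ sums as $\sum n^{-\alpha} = \infty$, so instead I would swap sum and expectation:
\[ \sum_{v \ge t} p_v = \E\bigl|\{v \in \Z_{\ge t} : v \in [-\xi, t-\xi)\}\bigr| \le 2\,\E\bigl[\min(-\xi, t)\,\mathbf{1}_{\xi \le -1}\bigr] + O(1), \]
since for $\xi \ge 0$ the set is empty, while for $\xi = -s < 0$ the set has size at most $\min(s,t) + 1$. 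A layer cake computation using $\P(\xi \le -r) \lesssim r^{-\alpha}$ then bounds this by $O(t^{1-\alpha})$. The sum over $v \le -1$ is handled symmetrically.

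For the lower bound, I would restrict to $v$ in the central range $[t/4, 3t/4]$. For such $v$, on one hand $p_v \ge \P(|\xi| < t/4) \ge \tfrac12$ for all large enough $t$; on the other hand, using symmetry of $\xi$ to convert $\P(|\xi|\ge r) \ge c r^{-\alpha}$ into $\P(\xi \ge r) \ge \tfrac{c}{2} r^{-\alpha}$, we get
\[ 1 - p_v \ge \P(\xi \ge t-v) \ge \tfrac{c}{2}(t-v)^{-\alpha} \gtrsim t^{-\alpha}. \]
Thus each of the $\Theta(t)$ terms contributes $p_v(1-p_v) \gtrsim t^{-\alpha}$, yielding $\var(\Pi[0,t)) \gtrsim t^{1-\alpha}$.

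The only delicate point is the far-field contribution to the upper bound: because $\alpha<1$, the tails are heavy enough that $\sum_{n \ge 1} n^{-\alpha}$ diverges, so summing $p_v$ term by term is too lossy. The crucial idea is that any realization of $\xi$ can only put mass in an interval of length $t$, so a fixed $\xi = -s$ contributes to at most $\min(s, t)$ values of $v$; it is exactly this truncation at $t$ that produces the correct $t^{1-\alpha}$ scaling rather than a divergent sum.
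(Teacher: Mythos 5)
Your proof is correct and follows essentially the same route as the paper's: both start from $\var(\Pi[0,t))=\sum_v p_v(1-p_v)$ by independence, get the lower bound from the $\Theta(t)$ central indices where $p_v$ is bounded away from $0$ and $1-p_v\gtrsim t^{-\alpha}$, and control the divergent-looking far-field sum by a Fubini-type swap that caps each perturbation's contribution at $\min(|\xi|,t)$. The only cosmetic difference is that the paper packages this swap as the symmetry identity $\sum_{k\in[0,t)}\P(k+\xi\notin[0,t))=\sum_{m\notin[0,t)}\P(m+\xi\in[0,t))$, whereas you bound the outside sum directly by counting and a layer-cake integral; both are valid.
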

\begin{proof}
    Since $\Pi[0,t)$ is a sum of independent indicators, we have 
    \[
    \var\big(\Pi[0,t) \big) = \sum_{k\in \Z} \P \big( k+\xi \in [0,t) \big)\cdot \P \big( k+\xi \notin [0,t) \big)  \, .
    \]
    For any integer $k\in [1,t/2]$, using the symmetry of $\xi$, we have that 
    \[
    ck^{-\alpha } \le \P\big(\xi < -k\big)\le \P\big(k+\xi \notin [0,t)\big) \le \P\big(|\xi | \ge k\big) \le Ck^{-\alpha}.
    \]
    Similarly, for any integer $k\in [t/2,t-1]$ we have
    \[
    c(t-k)^{-\alpha } \le  \P\big(k+\xi \notin [0,t)\big)  \le C(t-k)^{-\alpha}.
    \]
    We start with the lower bound on the variance. We have
    \begin{align*}
        \var\big(\Pi[0,t) \big) &\ge \sum_{k=1}^{\lfloor t/2\rfloor}  \P \big( k+\xi \in [0,t) \big)\cdot \P \big( k+\xi \notin [0,t) \big)  \\  &\ge c\sum_{k=1}^{\lfloor t/2\rfloor} \P(k+\xi \not\in [0,t]) \ge c\sum_{k=1}^{\lfloor t/2\rfloor} k^{-\alpha} \ge c t^{1-\alpha} \, ,
    \end{align*}
    which gives the desired lower bound. We turn to prove the upper bound. To this end, observe that by the symmetry of $\xi $ we have
    \begin{equation*}
        \sum _{k\in [0,t)} \P\big(k+\xi \notin [0,t)\big) = \sum _{k\in [0,t)} \sum _{m\notin [0,t)} \P\big(k+\xi \in [m,m+1)\big)= \sum _{m\notin [0,t)} \P\big(m+\xi \in [0,t)\big).
    \end{equation*}
    Thus, we obtain 
\begin{align*}
        \var\big(\Pi[0,t) \big) &\le \sum_{k\in [0,t)}  \P \big( k+\xi \notin [0,t) \big) + \sum_{k\notin [0,t)}  \P \big( k+\xi \in [0,t) \big) \\  &= 2\sum_{k\in [0,t)}  \P \big( k+\xi \notin [0,t) \big) \\ & \le 4 \sum_{k=0}^{\lfloor t/2\rfloor}  \P \big( k+\xi \notin [0,t) \big)  \le 4+4 \sum_{k=1}^{\lfloor t/2\rfloor} k^{-\alpha } \le Ct^{1-\alpha },
    \end{align*}
    as needed.
\end{proof}

We can now prove part (1) of Theorem~\ref{thm:d=1}.

\begin{proof}[Proof of Theorem~\ref{thm:d=1} (1)]
    This proof is inspired by \cite[Proof of Theorem 6(ii)]{Holroyd-Pemantle-Peres-Schramm}. 
    We construct the desired matching via the greedy algorithm that iteratively matches mutually closest points (see, e.g., \cite[discussion after Theorem~4]{Holroyd-Pemantle-Peres-Schramm}).
    This algorithm generates a perfect stable matching in the sense of Gale and Shapley \cite{gale1962college} under general conditions. Recall that a matching $M$ between two subsets $\Lambda,\Pi \subseteq \mathbb R$ is stable if for any $v\in \Lambda $ and $x \in \Pi \setminus \{ M(v)\}$ we have that $$|v-x|\ge \min \big\{ |v-M(v)|,|M^{-1}(x)-x|\big\} \, ,$$ so that it is not beneficial for both $v$ and $x$ to be paired together.
    It is well known that this works when $\Lambda \cup \Pi$ is discrete, non-equidistant, and has no descending chains (see, e.g., \cite[Lemma~15]{Holroyd-Pemantle-Peres-Schramm}). In our situation, $\Lambda=\Z$, the latter two properties do not hold, but it is easy to check that when $\xi $ is continuous, a suitable modification of them do hold and suffice for the construction. When $\xi$ has atoms, we might need to break ties during the greedy algorithm and we do this by matching left-most pairs first. 
    %This matching is a factor by construction.
    
    % which clearly violates the non-equidistant condition. However, it is apparent from the proof of \cite[Lemma~15]{Holroyd-Pemantle-Peres-Schramm} that it suffices that for all $u,v\in \mathbb Z$ and $x,y\in \Pi $ such that $u\neq v$ or $x\neq y$ we have $|u-x|\neq |v-y|$.
    
%When the distribution of $\xi $ is absolutely continuous, this matching is constructed 
    
    %It is well known , that one can construct such a matching using a greedy algorithm that iteratively match closest points and remove them {\color{red} issue with discrete distribution. shit}. 

    Next, we prove that the matching $M:\mathbb Z \to \Pi$ constructed above has the right tails. Let $t>0$ be a sufficiently large integer and define the set 
    \begin{equation}
        W:=\big\{ v\in [0,t) \cap \Z : |M(v)-v|>t \big\}.
    \end{equation}
    Let $a=\min W$ and $b=\max W$. We claim that for any $x\in \Pi \cap [a,b]$ we have that $M^{-1}(x)\in [a,b]$. To see this, suppose on the contrary that there is $x\in \Pi \cap [a,b]$ such that $M^{-1}(x)\notin [a,b]$. Without loss of generality suppose $M^{-1}(x)<a$. Then since $|M^{-1}(x)-x|>|a-x|$ and $|a-M(a)|>t>|a-x|$ the pair $a,x$ would be unstable, a contradiction.

    Thus, we obtain that 
    \begin{equation}\label{eq:W}
        |W| \le \max _{[r_1 ,r_2 ]\subseteq [0,t]} \big( r_2 -r_1 +1-\Pi [r_1 ,r_2 ] \big) \le 1+ \max _{r\in [0,t]} F(r)-\min _{r\in [0,t]} F(r),
    \end{equation}
    where $F(r):=r-\Pi [0,r)$.

    \noindent
    We finish the proof with a simple chaining argument in order to bound the expectation of the right hand side of \eqref{eq:W}. For all $r\in \mathbb N$ we have that $\mathbb E [F(r)]=0$ and by Lemma~\ref{lemma:d=1_variance}, $$\var (F(r))\le Cr^{1-\alpha } \, .$$ Thus, by Freedman's inequality (see, e.g., \cite[Theorems~3.6 and~3.7]{chung2006concentration}) we have for all $\lambda >0$ 
    \begin{equation}\label{eq:freedman}
        \mathbb P \big(  |F(r)|\ge \lambda  \big) \le 2\exp \Big( -\frac{\lambda ^2}{2\var (F(r))+\lambda } \Big) \le 2\exp \Big( -\frac{c\lambda ^2}{ r^{1-\alpha } +\lambda } \Big) \, .
    \end{equation}
    Next, let $j_1:=\lceil \log _2 t \rceil $ and fix $\epsilon := (1-\alpha) /4$. For $\lambda \ge 1$ define the event 
    \begin{equation*}
        \Omega := \bigcap _{j=0}^{j_1} \big\{ \forall u\in [0,t]\cap 2^j \mathbb Z , \  |F(u+2^j)-F(u)| \le \lambda t^{\frac{1-\alpha}{2}} 2^{ \epsilon (j-j_1)} \big\}.
    \end{equation*}
    Translation invariance implies that $F(u+r)-F(u)\overset{d}{=}F(r)$ and therefore by \eqref{eq:freedman} and a union bound we have  
    \begin{equation}\label{eq:bound omega}
        \mathbb P (\Omega ^c) \le C\sum _{j=0}^{j_1} t2^{-j} \mathbb P \big( |F(2^j)| > \lambda t^{\frac{1-\alpha}{2}} 2^{ \epsilon (j-j_1)} \big) \le C\sum _{j=0}^{j_1} t2^{-j} \exp    \big( -c\lambda 2^{\epsilon (j_1-j) } \big)\le Ce^{-c\lambda }.
    \end{equation}
    Moreover, we claim that on $\Omega $ we have $|F(r)|\le C\lambda t^{\frac{1-\alpha }{2}}$ for all $r\in [0,t]$. Indeed, for an integer $r\in [0,t]$, there is a sequence of integers $0=r_0\le r_1\le \cdots \le r_{j_1}=r$ such that $r_{j+1}\in 2^{j_1-j}\Z$ and $r_{j+1}-r_j \in \{0,2^{j_1-j}\}$ for all $j\in [0,j_1)$ (this sequence corresponds to the binary expansion of the integer $r$). On the event $\Omega $ we have that 
    \begin{equation*}
        |F(r)| \le \sum _{j=0}^{j_1-1} |F(r_{j+1})-F(r_j)| \le \sum _{j=1}^{j_1} \lambda t^{\frac{1-\alpha}{2}} 2^{ -\epsilon j} \le C \lambda t^{\frac{1-\alpha}{2}}.
    \end{equation*}
    We obtain using \eqref{eq:bound omega} that
    \begin{equation*}
     \mathbb P \Big(  \max _{r\in [0,t]}|F(r)|\ge C\lambda t^{\frac{1-\alpha }{2}}  \Big) \le \mathbb P (\Omega ^c) \le e^{-c\lambda } .
    \end{equation*}
    Thus, by translation invariance and \eqref{eq:W}  we have
    \begin{equation*}
        t \cdot \mathbb P \big( |M(0)| > t \big) =  \mathbb E |W| \le 1+2\cdot  \mathbb E \Big[  \max _{r\in [0,t]} |F(r)| \Big] \le  Ct^{\frac{1-\alpha }{2}}.
    \end{equation*}
    This finishes the proof part (1) of the theorem.
    \end{proof}

We turn to prove part (2) of Theorem~\ref{thm:d=1}. To this end, we will need the following simple claim.

\begin{claim}\label{claim:finite_moment_implies_L_1_truncation_cannot_grow_too_fast}
    Let $Z$ be a positive random variable such that $\E[Z^{\frac{1+\alpha}{2}}]<\infty$, then
    \[
    \lim_{t\to \infty}    t^{\frac{\alpha-1}{2}} \E\big[Z\wedge t\big] = 0 \, .
    \]
\end{claim}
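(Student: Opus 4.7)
The plan is to set $\beta := (1+\alpha)/2 \in (1/2, 1)$ and recast the claim as $t^{\beta-1}\E[Z \wedge t] \to 0$ as $t \to \infty$. The strategy has two ingredients: first, extract a vanishing-tail statement from the finiteness of the $\beta$-th moment, and second, split the layer-cake integral $\E[Z \wedge t] = \int_0^t \P(Z > s)\, ds$ into a bounded piece and a piece of order $t^{1-\beta}$ whose prefactor can be made arbitrarily small.

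For the first ingredient, I would use $s^\beta \P(Z > s) \le \E[Z^\beta \1_{Z > s}]$ together with dominated convergence (justified by $\E[Z^\beta] < \infty$) to conclude that $\eta(s) := s^\beta \P(Z > s) \to 0$ as $s \to \infty$. Then, fixing $\epsilon > 0$, I would pick $S = S(\epsilon)$ so that $\eta(s) \le \epsilon$ for all $s \ge S$, and estimate
\[
\E[Z \wedge t] \le \int_0^S \P(Z > s)\, ds + \int_S^t \epsilon s^{-\beta}\, ds \le S + \frac{\epsilon}{1-\beta}\, t^{1-\beta}.
\]
Multiplying through by $t^{\beta-1} = t^{(\alpha-1)/2}$ yields $t^{\beta-1}\E[Z \wedge t] \le S t^{\beta-1} + \epsilon/(1-\beta)$. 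Since $\beta < 1$, the first term vanishes as $t \to \infty$, so $\limsup_{t \to \infty} t^{\beta-1}\E[Z \wedge t] \le \epsilon/(1-\beta)$, and letting $\epsilon \downarrow 0$ gives the claim.

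The argument is essentially routine and presents no real obstacle. The only place the hypotheses genuinely enter is the requirement $\beta < 1$, equivalently $\alpha < 1$, which is needed both so that $t^{\beta-1}$ decays and so that $\int_S^t s^{-\beta}\, ds$ grows no faster than $t^{1-\beta}$, enabling the $\epsilon$-prefactor to control the leading-order behavior.
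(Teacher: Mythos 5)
Your argument is correct, but it takes a different route from the paper's. You first convert the moment hypothesis into a quantitative tail bound $s^{\beta}\P(Z>s)\le\E[Z^{\beta}\1_{Z>s}]\to0$, and then run an $\epsilon$-splitting of the layer-cake integral $\E[Z\wedge t]=\int_0^t\P(Z>s)\,ds$ at a threshold $S(\epsilon)$; the condition $\beta<1$ enters to make $\int_S^t s^{-\beta}\,ds$ of order $t^{1-\beta}$ and to kill the boundary term $St^{\beta-1}$. The paper instead applies dominated convergence in one shot to the family of random variables $t^{\frac{\alpha-1}{2}}(Z\wedge t)$: the key observation is the pointwise domination $t^{\frac{\alpha-1}{2}}(Z\wedge t)\le Z^{\frac{1+\alpha}{2}}$ (checked separately on $\{Z\ge t\}$ and $\{Z<t\}$, using $\alpha<1$ in the latter case), combined with the almost sure convergence $t^{\frac{\alpha-1}{2}}(Z\wedge t)\le t^{\frac{\alpha-1}{2}}Z\to0$. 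The paper's version is shorter and avoids any integral manipulation, while yours is more explicit about where the decay rate $t^{1-\beta}$ comes from and would adapt more readily if one wanted a quantitative bound in terms of the tail function of $Z$ rather than a bare limit. Both proofs are complete and use the hypothesis $\alpha<1$ in essentially the same place.
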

\begin{proof}
    Observe that $t^{\frac{\alpha-1}{2}} (Z \wedge t) \le Z^{\frac{1+\alpha}2}$. Since $t^{\frac{\alpha-1}{2}} (Z \wedge t) \le t^{\frac{\alpha-1}{2}} Z \to 0$ as $t \to \infty$, almost surely, the dominated convergence theorem implies the claim.
\end{proof}
\begin{proof}[Proof of Theorem~\ref{thm:d=1} (2)]
  The proof here follows a similar argument given in~\cite[Proof of Theorem~2 (case $d=1$)]{Holroyd-Pemantle-Peres-Schramm}. Let $t\ge 1$ be a sufficiently large integer. We have
  \begin{align*}
      \E\Big[  \# \big\{ k\in \Z\cap [0,2t) \, : \, M(k)\not\in [0,2t) \big\} \Big]   &= \sum_{k=0}^{2t-1} \P\big(M(k) \notin [0,2t) \big)\\ &=\sum_{k=0}^{2t-1} \P\big(M(0) \notin [-k,2t-k) \big) \, . 
  \end{align*}
  Furthermore,
  \begin{multline*}
   \sum_{k=0}^{2t-1} \P\big(M(0) \notin [-k,2t-k) \big) \le \sum_{k=0}^{2t-1} \P\big( |M(0)| \ge k \wedge(2t-k)\big) \\ \le 2 \sum_{k=0}^t \P\big( |M(0)| \ge k\big)\le 2+2\int _0^t \mathbb P \big( |M(0)| \ge s\big) ds   \le 2+2 \, \E[|M(0)|\wedge t] \, ,
  \end{multline*}
  and hence
  \begin{equation}
    \label{eq:upper_bound_on_expexctation_d=1}
      \E\Big[  \# \big\{ v\in \Z\cap [0,2t) \, : \, M(v)\not\in [0,2t) \big\} \Big]  \le 2+ 2\, \E[|M(0)|\wedge t] \, .
  \end{equation}
  The random variable $\Pi[0,2t)$ is a sum of independent indicators with $\mathbb E [\Pi[0,2t)]=2t$ and by Lemma~\ref{lemma:d=1_variance} we have $\sigma _t^2:=\var (\Pi [0,2t))\ge ct^{1-\alpha }$. Thus, by the Lindenberg-Feller central limit theorem (see, e.g., \cite[Theorem~3.4.10]{durrett2019probability}) we have that $$\qquad \frac{\Pi[0,2t)-2t}{\sigma _t} \xrightarrow{ \  d \ } N(0,1) \, , \qquad \text{as } \ t\to\infty.$$
  We obtain for sufficiently large $t$ that
  \begin{multline*}
      \E\Big[  \# \big\{ v\in \Z\cap [0,2t) \, : \, M(v)\not\in [0,2t) \big\} \Big] \\  \ge \E\Big[\big(2t - \Pi[0,2t)\big)^+ \Big] \ge \sigma _t \cdot \mathbb P   \big( 2t - \Pi[0,2t) \ge \sigma _t \big) \ge ct^{\frac{1-\alpha}{2}} \, .
  \end{multline*}
  Combining the above with~\eqref{eq:upper_bound_on_expexctation_d=1}, we see that
  \[
  \liminf_{t\to \infty} \frac{\E[|M(0)|\wedge t]}{t^{\frac{1-\alpha}{2}}} > 0 \, , 
  \]
  which, in view of Claim~\ref{claim:finite_moment_implies_L_1_truncation_cannot_grow_too_fast}, finishes the proof of part (2) of Theorem~\ref{thm:d=1}. 
\end{proof}

\section{Open problems}
\label{sec:open_problems}
We conclude this paper with three open problems.
\subsection{Factor matching}
A natural question, raised already in the introduction, is whether one can construct a perfect \emph{factor} matching $M$ which attains the hole probability of the perturbed lattice $\Pi$? While we believe this should be possible, we note that allowing extra randomness can, in some settings, improve the tail behavior of the matching distance. See for example~\cite[Theorem~3]{Holroyd-Pemantle-Peres-Schramm} for a case in which this phenomenon occurs.
We also point out that the matching described in Theorem~\ref{thm:d=1} part (1) is in fact a factor, as can be verified directly from the construction.

\subsection{Other hyperuniform point processes}
As mentioned in the introduction, one of our motivations for studying this problem comes from recent works on random transports of hyperuniform point processes to the lattice (see Section~\ref{sec:related_works} for the relevant definitions and references). Besides random perturbations of the lattice, two other well studied two-dimensional hyperuniform point processes are the Ginibre ensemble and the zero set of the Gaussian Entire Function (GEF). The latter is defined as the zero set of the random Taylor series 
    \[
    F(z) = \sum_{n=0}^\infty \zeta_n\frac{z^n}{\sqrt{n!}}
    \]
    where $\{\zeta_n\}$ is a sequence of independent standard complex Gaussian random variables. The zero set of the GEF is translation invariant, and its hole probability scacles like $\exp(-cr^4)$, see~\cite{Nishry-hole,Sodin-Tsirelson-III}. In~\cite{Sodin-personal,Sodin-Tsirelson-II} (see also~\cite{Nazarov-Sodin-Volberg} for a related construction), it was shown there exists a translation invariant matching $M$ from the lattice to the GEF zeros such that 
    \[
    \P\big( \|M(0) \| \ge r\big) \le \exp\big(-cr^4/\log r\big) \, ,
    \]
    so the hole probability is nearly achieved in this case. We do not know whether the logarithmic correction can be removed for the GEF zeros, and it would be interesting to find out. 

    The Ginibre ensemble is the weak limit as $N\to \infty$ of the eigenvalues of an $N\times N$ random matrix with independent standard Gaussian entries, see~\cite[Section~4.3.7]{GAFbook}. One important feature of the Ginibre ensemble is the fact that it is a determinantal point process, meaning the its k-point functions can be expressed as determinants of a kernel. This structure makes many exact computations more tractable for this model. Its hole probability also scales like $\exp(-cr^4)$, as follows easily from Kostan's theorem~\cite[Theorem~4.7.3]{GAFbook}. Can one construct a translation invariant perfect matching which achieves this hole probability as a tail upper bound? We are not aware of any results in this direction in the literature.

\subsection{Polynomial perturbations in $d\ge 2$}
In dimensions $d\ge 2$, we have shown that for polynomial perturbations with finite mean, there exists a translation invariant perfect matching whose matching distance has optimal tail decay. Does this remain true for polynomial perturbations with infinite mean? We conjecture that for $d\ge 2$, and for all polynomial perturbations (with arbitrarily slow decay), there exists a translation invariant perfect matching which achieves the hole probability as a tail bound. 

This would contrast with the one-dimensional case, where Theorem~\ref{thm:d=1} shows there is a qualitative change of behavior for polynomial perturbations with infinite mean.

\bibliographystyle{abbrv}
\bibliography{matching}

\end{document}